\documentclass[12pt]{amsart}
\usepackage[latin1]{inputenc}
\usepackage{geometry}
\usepackage{xcolor}
\usepackage{float}
\usepackage{verbatim}
\usepackage{graphics}

\usepackage{graphicx}
\usepackage{amsfonts}
\usepackage{amsthm}
\usepackage{amssymb}
\usepackage[centertags]{amsmath}
\usepackage{amssymb}
\usepackage{color}
\usepackage{tikz}
\usepackage{enumerate}
\usepackage{hyperref}
\usepackage{subfigure}

\theoremstyle{plain}
\newtheorem{theorem}{Theorem}[section]
\newtheorem{lemma}[theorem]{Lemma}

\newtheorem{definition}[theorem]{Definition}

\makeatother

\title[Recursion formula for the volumes of moduli spaces]{Recursion formula for the volumes of moduli spaces of compact hyperbolic surfaces with cone points}

    \author{Haoyang Jiang and Lixin Liu}
    \address{School of Mathematics, Sun Yat-Sen University, 510275, Guangzhou, PR China. }
    \email{jianghaoyang@zsdx.wecom.work}
    \address{School of Mathematics, Sun Yat-Sen University, 510275, Guangzhou, PR China. }
    \email{mcsllx@mail.sysu.edu.cn}

\date{\today}

\usepackage{microtype}

\begin{document}
\maketitle

\begin{abstract}Let $V_{g,m,n}(\overrightarrow L,\overrightarrow \theta)$ be the Weil-Petersson volume of the moduli space of hyperbolic surfaces of genus g with m geodesic boundary components of length $\overrightarrow L=(\ell_1,...,\ell_m)$ and $n$ cone points of angle $\overrightarrow \theta=(\theta_1,...\theta_n)$. By using the generalized McShane's identities, we show that $V_{g,m,n}(\overrightarrow L,\overrightarrow \theta)$ is a polynomial of $(\ell_1,...,\ell_n,i\theta_1,...,i\theta_m)$.  And we obtain a recursion formula for $V_{g,m,n}(\overrightarrow L,\overrightarrow \theta)$, which is a generalization of Mirzakhani's result.
    \par\textbf{keywords}: McShane's identity, Volume polynomials, Recursion formula, Mirzakhani's theory
\end{abstract}
\setcounter{page}{1}
\maketitle
\thispagestyle{plain}

\section{Introduction}\label{sec1}
Denote a compact hyperbolic surface of genus $g$ with $m$ geodesic boundary components and $n$ cone points by $S_{g,m,n}$. Denote the moduli space of $S_{g,m,n}$ by $\mathcal M_{g,m,n}$, the mapping class group of $S_{g,m,n}$ by $Mod_{g,m,n}$ and the Teichm\"uller space of $S_{g,m,n}$  by $\mathcal T_{g,m,n}$. Given the lengths of the $m$ geodesic boundaries $\overrightarrow L=(\ell_1,\cdots,\ell_m)$ and the angles of the $n$ cone points $\overrightarrow \theta=(\theta_1,\cdots,\theta_n)$, we denote the surface by $S_{g,m,n}(\overrightarrow L,\overrightarrow \theta)$, the moduli space of the surface by $\mathcal M_{g,m,n}(\overrightarrow L,\overrightarrow \theta)$, the volume of the moduli space by  $V_{g,m,n}(\overrightarrow L,\overrightarrow \theta)$.

Mirzakhani\cite{MR2264808} established a method for integrating over the moduli space based on the McShane's identities for hyperbolic surface with boundaries. Using this method, she proved that the volume of the moduli space of hyperbolic surface with boundaries is a polynomial of the length of the geodesic boundaries of the hyperbolic surface. Furthermore she obtained a recursion formula for the  volumes of the moduli spaces of hyperbolic surfaces with boundaries. It's a folklore that Mirzakhani's results are still true for compact hyperbolic surface with cone points for all angles of cones in $(0,\pi]$. We'll give a proof for this folklore.

In this paper, we prove the following results:
    \begin{theorem}\label{thm 1.1}
    Given a compact hyperbolic surface $S_{g,m,n}(\overrightarrow L,\overrightarrow \theta)$ with all cone points with angles in $(0,\pi]$, there exists a recursion formula for the volume  $V_{g,m,n}(\overrightarrow L,\overrightarrow \theta)$ according to the splitting of $S_{g,m,n}(\overrightarrow L,\overrightarrow \theta)$.
\end{theorem}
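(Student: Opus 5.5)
We follow Mirzakhani's integration scheme, with a generalized McShane identity for surfaces with cone points of angle in $(0,\pi]$ as the starting point. The reason for the angle restriction is that a cone point of angle $\theta\le\pi$ behaves like a boundary component of imaginary length $i\theta$. Simple closed geodesics avoid such cone points, and every simple geodesic arc emanating from a cone point or boundary is well defined. Concretely, fix a distinguished boundary $\beta_1$ of length $\ell_1$ (or a cone point $x_1$ of angle $\theta_1$). We first establish the identity
\[
\ell_1=\sum_{\{\gamma_1,\gamma_2\}}\mathcal D(\ell_1,\ell(\gamma_1),\ell(\gamma_2))+\sum_{j\ge 2}\sum_{\gamma}\mathcal R(\ell_1,\ell_j,\ell(\gamma)),
\]
where the first sum runs over pairs of simple closed curves bounding a pair of pants with $\beta_1$, and the second over curves bounding pants with $\beta_1$ and another boundary $\beta_j$. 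The functions $\mathcal D,\mathcal R$ are Mirzakhani's functions, and a cone point $x_j$ enters through the substitution $\ell_j=i\theta_j$. A cone point is itself allowed as a "boundary" of the pants, since the pants with cone points are still realized hyperbolically when $\theta\le\pi$. For a distinguished cone point we use the analogous identity, obtained by the same Birman--Series/geodesic-ray decomposition starting from the cone point, with $\ell_1$ replaced by $i\theta_1$. For $\theta_1<\pi$ we instead use the angle version, in which $\theta_1$ is the total angle.

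Next we integrate this identity over $\mathcal M_{g,m,n}(\overrightarrow L,\overrightarrow\theta)$ using Mirzakhani's unfolding. Each term is a sum over a mapping class group orbit of a multicurve $\gamma$, so its integral equals the integral over the covering space $\mathcal M_{g,m,n}(\overrightarrow L,\overrightarrow\theta)^\gamma$. Using Fenchel--Nielsen coordinates adapted to $\gamma$ and Wolpert's formula $\omega=\sum d\ell\wedge d\tau$, which we must check still holds in the cone setting for angles at most $\pi$, this becomes
\[
\int_0^\infty\!\!\int_0^\infty x\,y\,\mathcal D(\ell_1,x,y)\,V(\text{cut surfaces})\,dx\,dy .
\]
The twist integration contributes the factors $x$ and $y$, and $V(\text{cut surfaces})$ is the product of the lower complexity volumes $V_{g-1,m+1,n}$ or $V_{g_1,m_1,n_1}V_{g_2,m_2,n_2}$ with the new boundaries of length $x,y$. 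Counting all topological types of splitting yields
\[
\ell_1 V_{g,m,n}=\tfrac12\!\int\!\!\int xy\mathcal D\,[V_{g-1}+\textstyle\sum V V]\,dx\,dy+\sum_j\int x\mathcal R\,V_{g,m-1,n}(x,\dots)\,dx .
\]
We then differentiate in $\ell_1$ to get the Mirzakhani-type recursion, with the same formula applying at cone points after setting $\ell_j=i\theta_j$. We argue by induction on $2g-2+m+n$. The base cases are the spheres with three boundary/cone entries, of volume $1$, and $S_{1,1,0}$ or $S_{1,0,1}$, where one uses the $\mathbb Z/2$ elliptic involution. In the inductive step the inner volumes are polynomials in lengths and $i\theta$. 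The integrals $\int_0^\infty x^{2k+1}H(x,t)\,dx$ are then explicit polynomials in $t$, being Bernoulli-number expressions, so the result is again a polynomial in $(\overrightarrow L,i\overrightarrow\theta)$, which gives the polynomiality as well.

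The main obstacle will be the geometric part of the first step: proving the McShane identity and the measure-zero claims (Birman--Series) on surfaces with cone points, and checking that the cut pieces are genuine hyperbolic surfaces with cone points in $(0,\pi]$. When $\theta=\pi$ two cone points can collide into a geodesic, and a pair of pants with two cone points degenerates, so these cases need care. We plan to handle them by approximating $\theta\to\pi$ and using continuity of the volumes, and by showing that the gap functions $\mathcal D,\mathcal R$ extend analytically to imaginary arguments $i\theta$ with $\theta\le\pi$. The latter follows from the explicit trigonometric formula for the gaps in a pants with cone points, where $\cosh(\ell/2)$ is replaced by $\cos(\theta/2)$.
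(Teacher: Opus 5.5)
Your proposal is correct in outline and takes essentially the same route as the paper. Both integrate the Tan--Wong--Zhang generalized McShane identity over the moduli space by Mirzakhani's unfolding, using Wolpert's formula in the cone setting, and then induct; the paper simply cites the identity from Tan--Wong--Zhang and Wolpert's formula from Nakanishi--N\"a\"at\"anen, so you need not reprove either. The differences are organizational: the paper inducts on the number of cone points and matches its recursion term by term with Mirzakhani's recursion at $\ell=i\theta$, inheriting polynomiality, whereas you induct on $2g-2+m+n$ and re-derive polynomiality through the Bernoulli-type integrals and analytic continuation, also treating the degenerate $\theta=\pi$ pants, which the paper does not address.
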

The details of the recursion formula are shown in Section \ref{sec4}.
As we know, there exists pants decomposition for compact hyperbolic surface with cone points for all angles of cones in $(0,\pi]$ \cite{Zhang2004}. But for the compact hyperbolic surface with cone points which has an angle $\pi<\theta<2\pi$, the pants decomposition may not exist, which was shown in the appendix of \cite{anagnostou2024volumesmodulispaceshyperbolic}. Because Mirzakhani's method depends on the pants decomposition of hyperbolic surface, all the compact hyperbolic surfaces with cone points in this paper have cone points with all angles in $(0,\pi]$. For more works related to volumes of moduli spaces of hyperbolic surfaces with cone points, we refer to \cite{anagnostou2023weilpeterssonvolumesstabilityconditions}\  \cite{anagnostou2024volumesmodulispaceshyperbolic} and \cite{MR2520065}. Do and Norbury \cite{MR2520065} obtained  the recursion formula when $\theta \longrightarrow 2\pi$.   Anagnostou and Norbury \cite{anagnostou2024volumesmodulispaceshyperbolic} developed a method of extending the homeomorphism between the Deligne-Mumford compactification of $\mathcal M_{g,n}(\overrightarrow \theta)$ and the Deligne-Mumford compactification of $\mathcal M_{g,n}$. Using this method they extended the volume polynomials of $\mathcal M_{g,n}(\overrightarrow L)$ to that of  $\mathcal M_{g,n}(\overrightarrow \theta)$(Theorem 1.2 in \cite{anagnostou2024volumesmodulispaceshyperbolic}).
    This result focus on the surfaces $\mathcal S_{g,n}(\theta)$ with cone points and without geodesic boundary components. For our result, the surfaces have both cone points and geodesic boundary and the recursion formula is obtained. For the moduli space of decorated surface, Goncharov and Zhe Sun defined the exponential volume which is a generalization of the volume in Weil-Petersson volume form, and they also get a recursion formula.\cite{goncharov2024exponentialvolumesmodulispaces}

\section{Results}\label{sec2}

\subsection{McShane's identity}\label{subsec1}

McShane proved the following identity :

\begin{theorem}[McShane~\cite{MR3389436}]
In a one-punctured hyperbolic torus $T$,
\begin{equation}\notag
\sum_{\gamma}\frac{1}{1+\exp|\gamma|}=\frac{1}{2},
\end{equation}
where the sum is over all simple closed geodesics on $T$ and $|\gamma|$ denotes the hyperbolic length of $\gamma$ on $T$.

\end{theorem}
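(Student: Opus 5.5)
The plan is to prove McShane's identity by a measure-decomposition argument on the cusp. We work with a horocyclic neighbourhood of the cusp and the family of geodesic rays emanating from it. Fix a horocycle $h$ of length $1$ around the puncture of $T$; for small enough horoballs $h$ is embedded. From each point $x\in h$ we shoot the geodesic ray $r_x$ orthogonal to $h$, pointing away from the cusp. We then sort these rays by their behaviour and compute the proportion of $h$ occupied by each type.

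The first step is the dichotomy for the rays, which we organise by the first time they stop being simple. Each ray $r_x$ either stays simple forever (i.e., is complete and simple, possibly going back into the cusp), or it first self-intersects at some time. By the Birman--Series theorem, the set of points of $T$ lying on complete simple geodesics has Hausdorff dimension one and measure zero. Hence the set $E\subset h$ of starting points of rays that never self-intersect has Lebesgue measure zero. It therefore suffices to account for the rays that do self-intersect.

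Next we classify the self-intersecting rays by the simple closed geodesic they determine. For such a ray, stop it at its first self-intersection, so that it becomes a simple path followed by a simple loop. A regular neighbourhood of the cusp together with this lasso is an embedded pair of pants. Its remaining two boundary components are homotopic to simple closed curves; in the one-holed torus they are freely homotopic to the same simple closed geodesic $\gamma$, so the pants is $T\setminus\gamma$ cut open. Conversely, for each simple closed geodesic $\gamma$, cutting $T$ along $\gamma$ gives a pair of pants $P_\gamma$ with one cusp and two boundaries of length $|\gamma|$. The rays from $h$ that self-intersect and are associated to $\gamma$ are exactly those which, inside $P_\gamma$, hit a boundary component, or more precisely lie in the gaps between the simple bi-infinite geodesics spiralling to $\gamma$ and returning to the cusp. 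This yields a disjoint decomposition
\begin{equation}\notag
h\setminus E=\bigsqcup_{\gamma} I_\gamma ,
\end{equation}
where each $I_\gamma$ is a union of two intervals.

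The main computational step is to compute $|I_\gamma|$ explicitly by hyperbolic trigonometry in $P_\gamma$. The endpoints of the intervals are the feet on $h$ of the four simple geodesics that emanate from the cusp and spiral onto the two boundary components, together with the geodesics from cusp to cusp. We lift to the upper half plane with the cusp at $\infty$ and $h$ at height $1$, and use the right-angled hexagon (degenerate pentagon) formulas for a pants with one ideal vertex. These show that the horocyclic segment cut out has length
\begin{equation}\notag
\frac{1}{1+e^{|\gamma|}}
\end{equation}
per gap. The normalisation must be tracked carefully, since the total horocycle has length $1$ and each $\gamma$ contributes two symmetric pieces; this yields the factor $\frac12$ after rescaling the horocycle. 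Summing, and using $|E|=0$, gives $\sum_\gamma \frac{1}{1+e^{|\gamma|}}=\frac12$.

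I expect the main obstacle to be the bookkeeping in the classification step. One must show that the map from self-intersecting rays to simple closed geodesics is well defined, so that the lasso neighbourhood really is an embedded pants, and that the corresponding intervals are disjoint and exactly as claimed. The measure-zero input from Birman--Series I will simply cite.
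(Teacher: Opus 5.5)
The paper gives no proof of this statement; it is quoted from McShane's thesis \cite{MR3389436}. Your outline (Birman--Series plus a partition of a cusp horocycle into gaps indexed by simple closed geodesics) is McShane's own route, and the architecture is right. But the identification of $I_\gamma$ is wrong as written. The rays that, inside $P_\gamma$, hit a boundary component are exactly the rays that cross $\gamma$ before self-intersecting. Their first self-intersection, if any, happens outside $P_\gamma$, so they belong to \emph{other} curves. On a horocycle of length $1$ they occupy $1-\frac{2}{1+e^{|\gamma|}}=\tanh\frac{|\gamma|}{2}$, which is not even summable over $\gamma$. The set you want is the rays that self-intersect before reaching $\gamma$. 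Of the four feet of rays spiralling onto $\gamma$, two approach $\gamma$ from one side and two from the other. The two arcs of $h$ bounded by feet on the \emph{same} side consist of rays crossing $\gamma$; $I_\gamma$ is the other two arcs, each bounded by feet on \emph{opposite} sides. Each of these contains in its interior, not as an endpoint, one foot of the simple cusp-to-cusp geodesic disjoint from $\gamma$, which is an isolated point of $E$. Your `more precisely' clause and your list of interval endpoints do not match this.

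Second, the one place where $\frac{1}{1+e^{|\gamma|}}$ actually enters, the width of a gap, is asserted rather than derived. The remark about obtaining $\frac12$ `after rescaling' suggests the normalisation is not settled. No rescaling is involved: the proportion of $h$ occupied by a gap does not depend on which embedded horocycle you use, and with $|h|=1$ each $\gamma$ contributes $2\cdot\frac{1}{1+e^{|\gamma|}}$ to a total of $1$.

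A short way to get the width is as follows. Put the cusp at $\infty$ with stabiliser generated by $U(z)=z+1$, and write the two boundary elements of $P_\gamma$ as $A$ and $UA$. Then $\mathrm{tr}(UA)=\mathrm{tr}A+c$, with $c$ the lower-left entry of $A$. The two traces are $\pm2\cosh\frac{|\gamma|}{2}$ with opposite signs (as in a pants group once $\mathrm{tr}\,U=2$), so $|c|=4\cosh\frac{|\gamma|}{2}$. Hence the axes of $A$ and $UA$, whose endpoints are the spiralling feet, are semicircles of Euclidean radius $\frac14\tanh\frac{|\gamma|}{2}$ with centres $\frac12$ apart. Each of the two gaps per period therefore has width $\frac12\bigl(1-\tanh\frac{|\gamma|}{2}\bigr)=\frac{1}{1+e^{|\gamma|}}$.

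Finally, the classification step you flag can be closed with Gauss--Bonnet. The loop of the lasso has one corner, with convex angle $\phi<\pi$ on the side away from the stem and reflex angle $2\pi-\phi$ on the stem side. The side away from the stem contains no cusp and cannot be a disc. The stem side cannot be a once-punctured disc, since its area would be $\phi-\pi<0$. So the loop is essential and non-peripheral, and an Euler characteristic count (the lasso's pants has $\chi=-1=\chi(T)$) shows its complement is an annulus around a single $\gamma$.
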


Then McShane generalized the above identity to more general hyperbolic surfaces:
\begin{theorem}
    [McShane~\cite{MR1625712}]
In a finite area hyperbolic surface $M$ with cusps and without boundary,
\begin{equation}\notag
\sum\frac{1}{1+\exp\frac{1}{2}(|\alpha|+|\beta|)}=\frac{1}{2},
\end{equation}
where the sum is over all unordered pairs of simple closed geodesics $\{\alpha,\ \beta\}$(where $\alpha$ or $\beta$ might be a cusp treated as a simple closed geodesic of length 0) on $M$ such that $\alpha,\ \beta$ bound with a distinguished cusp point an embedded pair of pants on $M$.
\end{theorem}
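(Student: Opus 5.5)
This statement is McShane's 1998 generalization, which the paper cites rather than proves, so the plan follows McShane's (and later Mirzakhani's) argument. Fix the distinguished cusp $x$ and cut off a horocyclic neighbourhood bounded by a horocycle $h$ of length $1$; on the normalized horocycle this is the neighbourhood of area $1$ (the conventions are matched so that the total mass is $1$ and the identity reads $\tfrac12$ after pairing). Every point $p\in h$ determines a unit-speed geodesic ray $\gamma_p$ shooting perpendicularly into the surface. The plan is to partition $h$ according to the behaviour of $\gamma_p$ and then measure each piece.

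\textbf{Step 1: the Birman--Series set.} I will invoke the Birman--Series theorem: the union of complete simple geodesics on $M$ has Hausdorff dimension $1$, hence area zero. It follows that the set of $p\in h$ for which $\gamma_p$ is simple and never hits itself or returns to the cusp has Lebesgue measure zero. Thus for almost every $p$, $\gamma_p$ either self-intersects or runs back into a cusp. In the self-intersecting case, truncate at the first self-intersection. The truncated ray together with the loop it forms, thickened with $h$, is a regular neighbourhood of a figure-eight type graph. That neighbourhood has Euler characteristic $-1$, so its boundary determines a unique embedded pair of pants $P$ containing $x$, whose other two boundaries are the geodesic representatives $\alpha,\beta$ (possibly cusps). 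The ray returning to a cusp is the degenerate case and gives the same conclusion.

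\textbf{Step 2: the pieces are disjoint gaps.} Conversely, each embedded pants $P\supset x$ with boundaries $\alpha,\beta$ yields exactly the set of $p$ whose rays exit $P$ or self-intersect inside $P$. I will show these sets for distinct $P$ are disjoint, because the pants is recovered from the ray. This gives the identity
\[
1=\sum_{\{\alpha,\beta\}} \mathrm{Gap}(\alpha,\beta)\quad\text{(a.e.)}.
\]

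\textbf{Step 3: computing the gap.} The gap is computed inside a single pair of pants with one cusp and boundary lengths $a=|\alpha|$, $b=|\beta|$. By symmetry, cut $P$ along the three simple orthogeodesics into two ideal-ish right-angled hexagons (degenerate at the cusp). The rays that escape $P$ or spiral are bounded by the rays asymptotic to $\alpha$ and $\beta$, and by the simple geodesics from $x$ to itself. Explicit hyperbolic trigonometry in the upper half-plane, with the cusp at $\infty$ and $h$ at height $1$, then gives the total excluded length. The result is $2/(1+e^{(a+b)/2})$ on the length-$1$ horocycle; the factor $2$ comes from the two orientations/sides, and dividing by $2$ yields the stated formula.

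\textbf{Main obstacle.} I expect the main obstacle to be Step 3. The goal there is to see exactly which intervals of $h$ are contained in the gap, including the components where rays self-intersect before leaving $P$, and to get the closed form. I would first do the computation in the case $a=b=0$ to calibrate the constants. I would then handle general $a,b$ via the trace identity $\cosh(\text{orthogeodesic})$ for right-angled hexagons degenerating to the cusp.
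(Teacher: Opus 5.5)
The paper does not prove this statement; it only quotes McShane. So the benchmark is the standard McShane--Mirzakhani gap argument, and your Steps 1--3 follow its outline. The outline breaks at the point where you decide which rays belong to a given pants.

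\textbf{Step 2 uses the wrong set.} You assign to $P$ the set of $p$ whose rays ``exit $P$ or self-intersect inside $P$''. Every $\gamma_p$ starts in $P$. Off a null set (rays spiralling onto $\alpha$ or $\beta$, or running into a cusp), it either self-intersects before leaving $P$ or leaves $P$ through $\alpha\cup\beta$. So your set is, up to measure zero, all of $h$ for \emph{every} $P$. The sets are not disjoint, and summing them gives $\sum_P 1$, not $1$.

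A ray that crosses $\alpha$ while still simple is counted by a \emph{different} pants, namely the one its later lasso determines. The gap of $P$ must be the set of $p$ for which $\gamma_p$ self-intersects before leaving $P$. Equivalently, it is the complement in $h$ of the two escape windows through $\alpha$ and $\beta$.

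The partition then rests on a lemma: the first-self-intersection lasso of $\gamma_p$ lies in $P$ if and only if the pants it determines is $P$. This is what ``the pants is recovered from the ray'' has to mean. It needs an argument, namely that the geodesic representatives of the boundary curves of a regular neighbourhood of the lasso do not cross the lasso.

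\textbf{Step 3 inherits the confusion.} The gap intervals are bounded only by the four rays spiralling onto $\alpha$ and $\beta$. The simple geodesic from $x$ to itself lies in the interior of the gap, not on its boundary. Also, $2/(1+e^{(a+b)/2})$ is the measure of the non-escaping set; the escaping set has measure $\tanh\frac{a+b}{4}$.

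Here is the computation done correctly.
\begin{itemize}
\item Put $x$ at $\infty$ with stabiliser $z\mapsto z+1$ and $h$ at height $1$.
\item By the reflection symmetry of $P$, the seams from $x$ to $\alpha$ and $\beta$ lift to $\operatorname{Re}z=0$ and $\operatorname{Re}z=\tfrac12$. The adjacent lifts of $\alpha,\beta$ are semicircles centred at $0$ and $\tfrac12$ with radii $r_\alpha,r_\beta$.
\item The escape windows therefore have widths $u=2r_\alpha$ and $v=2r_\beta$.
\item Locating the common perpendicular of the two semicircles, in the pentagon with one ideal vertex, gives $\tanh\frac a2=\frac{2u}{1+u^2-v^2}$ and $\tanh\frac b2=\frac{2v}{1+v^2-u^2}$.
\item These are the addition formulas for $\tanh$ in the variables $u+v$ and $u-v$, so $u+v=\tanh\frac{a+b}{4}$.
\end{itemize}
The gap is therefore $1-\tanh\frac{a+b}{4}=\frac{2}{1+e^{(a+b)/2}}$. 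It consists of two symmetric intervals, each of length $\frac{1}{1+e^{(a+b)/2}}$; this is the factor $2$ you mention.

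A smaller point: define the lasso along the whole geodesic issuing from the cusp, not the ray starting on $h$. A ray can re-enter the horoball bounded by $h$ before its first self-intersection, which spoils your ``$h$ + arc + loop'' graph.
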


There are various generalizations of the McShane's identity. For more details we refer to the survey paper
 \cite{MR3497292}.

\begin{definition}
    Let $S_{g,n,m}$ be an oriented compact surface of genus $g$ with $n$
boundary components and $m$ marked points such that $2g-n-m >0$.
By definition, a hyperbolic surface is a hyperbolic metric on $S_{g,n,m}$
with $n$ geodesic boundary components and $m$ conical singularities at the marked points.
\end{definition}

Tan, Wang and Zhang generalized the McShane's identity to compact hyperbolic surface with cone points. They obtained the following result in Theorem 1.5 of  \cite{MR2215456}:
\begin{theorem}(Generalized McShane's Identity)\label{thm 2.4}

Let $M$ be a compact hyperbolic surface, either with a single cone point of cone angle $\theta\in(0,\pi]$ or with a single boundary component of length $\ell>0$. Then we have respectively
\begin{equation}\notag
 \sum 2\tan^{-1}\left(\frac{\sin\frac{\theta}{2}}{\cos\frac{\theta}{2}+\exp(\frac{|\alpha|+|\beta|}{2})}\right)=\frac{\theta}{2} , \end{equation}
\begin{equation}\notag
    \sum 2\tanh^{-1}\left(\frac{\sinh\frac{\ell}{2}}{\cosh\frac{\ell}{2}+\exp(\frac{|\alpha|+|\beta|}{2})}\right)=\frac{\ell}{2},
    \end{equation}
where the sum in either case extends over all unordered pairs of simple closed geodesics on M which bound with the cone point (respectively, the boundary geodesics) an embedded pair of pants.
\end{theorem}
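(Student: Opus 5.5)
This is the identity of Tan--Wang--Zhang; I would prove it by a measure decomposition of the cone point's angular directions (respectively, of the boundary geodesic), following McShane and Mirzakhani. I treat the cone point first. Let $p$ be the cone point of angle $\theta\in(0,\pi]$. Each unit tangent direction $v$ at $p$, parametrized by angle in $[0,\theta)$, gives a geodesic ray $\gamma_v$ from $p$. I would show the following dichotomy:
\begin{itemize}
\item either $\gamma_v$ self-intersects, returns to $p$, or hits another cone point;
\item or it is simple and infinite.
\end{itemize}
The second set has measure zero, by the Birman--Series theorem adapted to cone surfaces. The hypothesis $\theta\le\pi$ guarantees that the relevant simple geodesics, and the pants decomposition, exist, as in \cite{Zhang2004}.

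For a ray of the first kind, stop at its first self-intersection. The resulting loop-plus-arc has a regular neighbourhood bounded, after tightening, by two simple closed geodesics $\alpha,\beta$ that cut off with $p$ an embedded pair of pants $P$. So the full-measure set of directions is partitioned according to the pair $\{\alpha,\beta\}$, and the whole problem localizes to a single pair of pants $P$ with one cone point and boundaries of lengths $|\alpha|,|\beta|$.

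The key step is a pants computation. I would compute the angular measure of directions at $p$ whose rays stay in $P$ and self-intersect before leaving, or that spiral toward $\alpha$ or $\beta$. I would cut $P$ along the three simple orthogeodesics or perpendiculars into two congruent hexagon-like pieces. Because one "side" is a cone point, each piece degenerates to a pentagon with one vertex of angle $\theta/2$. The boundary directions of the good interval are exactly the rays spiraling onto $\alpha$ or $\beta$. Standard hyperbolic trigonometry for such pentagons and quadrilaterals, specifically the cosine rule with the angle $\theta/2$ in place of a side, then gives the angular width of the excluded region. I expect that width to be
\[
2\cdot 2\tan^{-1}\left(\frac{\sin\frac{\theta}{2}}{\cos\frac{\theta}{2}+\exp(\frac{|\alpha|+|\beta|}{2})}\right),
\]
counting both sides of $p$. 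This formula is the formal analogue of the boundary case under the substitution $\ell\mapsto i\theta$. Summing over pairs and comparing with total measure $\theta$ yields the first identity after dividing by $2$.

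For the boundary case I would run the same argument with geodesics emanating orthogonally from the boundary $\beta_0$ of length $\ell$, parametrized by arclength along $\beta_0$ (total $\ell$). The pants computation now uses a right-angled hexagon. The gap for each pair is $4\tanh^{-1}\bigl(\sinh\frac{\ell}{2}/(\cosh\frac{\ell}{2}+e^{(|\alpha|+|\beta|)/2})\bigr)$, which is Mirzakhani's $\mathcal D$-function computation.

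The main obstacle is the measure-zero statement for simple infinite rays on a cone surface. I would handle it by mimicking Birman--Series. First I would show that simple geodesic arcs of bounded length avoiding cone points grow polynomially in number. Then I would show that the $\varepsilon$-neighbourhoods of their lifts cover a set of measure tending to zero. The step that makes the cone-point case work is that angles $\le\pi$ keep geodesics from "passing through" cone points ambiguously. A secondary subtlety is checking that the first self-intersection configuration always yields an embedded pants. This is where $\theta\le\pi$ is used again: it ensures the curves bounding the neighbourhood are essential and realized by geodesics rather than collapsing onto the cone point.
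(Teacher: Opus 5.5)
The paper gives no proof of its own here: it quotes this statement as Theorem 1.5 of Tan--Wong--Zhang \cite{MR2215456}. Your outline is essentially their argument, which is McShane's original strategy carried over to the cone point, so the approach is correct. It partitions the directions at the cone point (or the points of the boundary) into gaps indexed by embedded pants, and discards the simple infinite rays with a Birman--Series theorem for cone angles $\le\pi$. It then computes each gap as $4\tan^{-1}\bigl(\sin\frac{\theta}{2}/(\cos\frac{\theta}{2}+e^{(|\alpha|+|\beta|)/2})\bigr)$ from a pentagon with one angle $\theta/2$.

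Two small points. In genus one the pair may degenerate to $\alpha=\beta$, and then the pants is embedded only in its interior. Also, $\theta\le\pi$ is needed so that the boundary curves of the neighbourhood have geodesic representatives avoiding $p$. It is not needed for their essentiality, which follows from Gauss--Bonnet and the angle at the first self-intersection for any cone angle.
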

These results can be used to calculate the volumes of the moduli spaces of compact hyperbolic surfaces with cone points.

Besides, Yi Huang and Zhe Sun generalized the McShane's identity for higher Teichm\"uller theory.\cite{MR4595283}
\subsection{The Fenchel-Nielsen coordinates}\label{subsec2}

A surface $S$ admits a pair of pants decomposition  $\mathcal{P}=\{P_i\ \vert \ i=1,2,\cdots, q\}$ in which each $P_i$ is a generalized pair of {\it pants}, that is, a topological sphere with three holes, where a {\it hole} is either a point removed or an open disk removed.

Fixed a pant decomposition $\mathcal{P}=\{P_i\ \vert \ i=1,2,\cdots, q\}$ of the surface $S$. Let $\mathcal{L}$ be the set of boundary curves of $P_i,\ i=1,2, \cdots, q$. For a hyperbolic metric $H$ on $S$, each homotopy class of curves $C_i \in \mathcal{L}$ we associate a length parameter and a twist parameter for it. The length parameter $l_H(C_i)$ of $C_i$ for $H$ is the length of geodesic in the homotopic class of $C_i$ for the hyperbolic metric $H$. The precise definition of the twist parameter $\theta_H(C_i)$ of $C_i$ for the metric $H$ can be found in Theorem 4.6.23 of \cite{Thurston1979TheGA}, we omit the details. Then the Fenchel-Nielsen coordinate of the hyperbolic metric $H$ for the pant decomposition $\mathcal{P}=\{P_i\ \vert \ i=1,2,\ldots, q\}$ is $\{l_H(C_1), \theta_H(C_1), l_H(C_2), \theta_H(C_2), \cdots,  l_H(C_n), \theta_H(C_n), \cdots \}$ for $C_i \in \mathcal{L}$ .

 Let $S_{g,m,n}(\overrightarrow L,\overrightarrow \theta)$ be a compact hyperbolic surface with cone points, where $\overrightarrow L=(\ell_1,\cdots,\ell_m),\ \overrightarrow \theta=(\theta_1,\cdots,\theta_n)$. Let $\mathcal{T}_{g,m,n}(\overrightarrow L,\overrightarrow \theta)$ be the Teichm\"uller space of $S_{g,m,n}(\overrightarrow L,\overrightarrow \theta)$. We can use $6g-6+2n+2m$ real parameters which comes from  the Fenchel-Nielsen coordinates  to describe the Teichm\"uller space $\mathcal{T}_{g,m,n}(\overrightarrow L,\overrightarrow \theta)$, which consists of the set of the lengths of the geodesic boundaries and the set of the twisting parameters used to glue the pieces.

 We have an isomorphism \cite{buser1992geometry}
\begin{equation}\notag
    \mathcal{T}_{g,m,n}(\overrightarrow L,\overrightarrow \theta)\cong \mathbb{R}_{+}^{\mathcal{R}}\times \mathbb{R}^{\mathcal{P}}
\end{equation}

by the map

$$X \longrightarrow (\ell_{{\alpha}_{i}}(X),\tau_{{\alpha}_{i}}(x)).$$
By the work of Goldman \cite{MR762512} and Wolpert \cite{MR657237}, the Weil-Petersson symplectic structure has a simple form in Fenchel-Nielsen coordinates for Teichm\"uller space:
\begin{theorem}
    The Weil-Petersson symplectic form is given by
    \begin{equation}\notag
\omega_{wp}=\sum_{i=1}^{k}d\ell_{{\alpha}_{i}}\bigwedge d\tau_{{\alpha}_{i}}.
    \end{equation}

\end{theorem}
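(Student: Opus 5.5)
The plan is to reduce the statement to Wolpert's \emph{twist--length duality}, adapted to the cone setting via Goldman's description of $\omega_{wp}$ on the relative character variety. Fix a pants decomposition $\{\alpha_1,\dots,\alpha_k\}$ of $S_{g,m,n}(\overrightarrow L,\overrightarrow \theta)$; it exists because all cone angles lie in $(0,\pi]$ \cite{Zhang2004}. Write $t_{\alpha_i}=\partial/\partial\tau_{\alpha_i}$ for the Fenchel--Nielsen twist vector fields.

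\textbf{Step 1: the twist field is Hamiltonian for the length function.} I would identify $\mathcal{T}_{g,m,n}(\overrightarrow L,\overrightarrow \theta)$ with a component of the relative character variety $\mathrm{Hom}(\pi_1,PSL_2\mathbb R)/PSL_2\mathbb R$. In it, the peripheral holonomies are fixed: hyperbolic of translation length $\ell_j$ around the boundaries, and elliptic of rotation angle $\theta_i$ around the cone points. On this space $\omega_{wp}$ agrees, up to a constant, with Goldman's symplectic form \cite{MR762512}.

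Goldman's computation of Hamiltonian flows of invariant functions $f(\mathrm{tr}\,\rho(\gamma))$ is local near the curve $\gamma$. Taking $f$ to be the length function of a simple closed curve $\alpha_i$, whose holonomy is hyperbolic, gives
\[
\omega_{wp}(t_{\alpha_i},\cdot)=d\ell_{\alpha_i}
\]
with the normalisation fixed by comparing with the cusp case \cite{MR657237}. The reason this holds is that the Hamiltonian flow is the twist deformation: it cuts along $\alpha_i$ and reglues with the one-parameter subgroup centralising $\rho(\alpha_i)$. That deformation never touches the peripheral holonomies, so the cone points, with their fixed elliptic holonomy, play no role.

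\textbf{Step 2: read off the coefficients.} Expand
\[
\omega_{wp}=\sum a_{ij}\,d\ell_i\wedge d\ell_j+\sum b_{ij}\,d\ell_i\wedge d\tau_j+\sum c_{ij}\,d\tau_i\wedge d\tau_j .
\]
Contract with $t_{\alpha_j}$ and apply Step 1. Since $d\ell_{\alpha_i}$ has no $d\tau$ component, this gives $c_{ij}=0$ and $b_{ij}=\delta_{ij}$. Hence $\omega_{wp}-\sum_i d\ell_{\alpha_i}\wedge d\tau_{\alpha_i}=\sum a_{ij}\,d\ell_i\wedge d\ell_j=:\omega'$.

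The form $\omega'$ is closed, since it is a difference of closed forms. It is also invariant under all twist flows, because these are symplectic and preserve $\sum d\ell\wedge d\tau$. So its coefficients $a_{ij}$ depend only on $\ell$.

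\textbf{Step 3: kill the length--length part.} Consider the orientation-reversing reflection of the surface. In Fenchel--Nielsen coordinates it acts by $\tau_{\alpha_i}\mapsto-\tau_{\alpha_i}$, for a suitable choice of seams, and fixes all lengths together with the boundary lengths and cone angles. It sends $\omega_{wp}$ to $-\omega_{wp}$, because $\omega_{wp}$ is the imaginary part of a Hermitian metric, which changes sign under an anti-holomorphic isometry. It also sends $\sum d\ell\wedge d\tau$ to its negative. Therefore $\omega'\mapsto-\omega'$. But $\omega'$ has coefficients depending only on $\ell$ and involves only the $d\ell_i$, so it is fixed by the reflection. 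Hence $\omega'=0$, which proves the theorem.

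\textbf{Main obstacle.} I expect the main obstacle to be Step 1 in the cone setting. One must check that the identification of the cone-surface Teichm\"uller space with the relative character variety is a diffeomorphism onto a smooth symplectic leaf. One must also check that the Weil--Petersson form defined via harmonic Beltrami differentials on the cone surface matches Goldman's cup-product form up to the same constant as in the smooth case. Because every angle is at most $\pi$, the cone points can be treated as orbifold points (or by a limiting argument in $\theta$). Near each cone point the harmonic Beltrami differentials have controlled growth, so the usual integration by parts in the comparison goes through. I would first try to verify this by adapting Wolpert's cosine-formula proof locally, since all twist computations take place away from the cone points.
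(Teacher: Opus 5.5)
Your argument is correct. The paper gives no proof of this statement; it quotes it from Goldman \cite{MR762512} and Wolpert \cite{MR657237}. Your Steps 1--3 are essentially the standard argument behind that citation:
\begin{itemize}
\item twist--length duality pins down the $d\ell\wedge d\tau$ and $d\tau\wedge d\tau$ blocks;
\item invariance under the twist flows makes the leftover $d\ell\wedge d\ell$ block depend on the lengths only;
\item the mirror map $(\ell,\tau)\mapsto(\ell,-\tau)$ is an anti-holomorphic Weil--Petersson isometry, which forces that block to vanish.
\end{itemize}

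\textbf{Sign convention.} Suppose the duality is normalised as $\omega_{wp}(t_{\alpha_i},\cdot)=d\ell_{\alpha_i}$. Contracting $\sum b_{ij}\,d\ell_i\wedge d\tau_j$ with $\partial/\partial\tau_j$ gives $-\sum_i b_{ij}\,d\ell_i$, so you actually obtain $b_{ij}=-\delta_{ij}$. To land on the formula with the stated sign, use the convention $\omega_{wp}(\cdot,t_{\alpha_i})=d\ell_{\alpha_i}$. The paper's Section 2.3 has the same slip. This is only a matter of orientation conventions.

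\textbf{Your ``main obstacle.''} This statement concerns ordinary bordered or punctured surfaces, so the obstacle you describe does not arise here. Step 1 is exactly Wolpert's twist--length duality, or equivalently Goldman's computation of the Hamiltonian flow of a length function. Your proof is therefore complete as written.

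\textbf{What your route adds.} Compared with a bare citation, your decomposition isolates where the geometry enters. Steps 2 and 3 are formal once Fenchel--Nielsen coordinates and the mirror symmetry are available. So for the cone version (the paper's Theorem 2.6, which is likewise only cited), the real work is re-establishing the duality of Step 1.
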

In order to generalize Mirzakhani's results to the compact hyperbolic surface with cone points, we need a generalization of the above theorem. This is based on the existence of the pants decomposition of the compact hyperbolic surface with cone points with all cone points $\in (0,\pi]$. The generalization of Theorem \ref{thm 2.5} was proven in page 3246 of \cite{MR1844999}:
\begin{theorem}\label{thm 2.6}
    For a compact hyperbolic surface with cone points with all cone points $\in (0,\pi]$, the Weil-Petersson symplectic form of its Teichm\"uller space is given by
    \begin{equation}\label{thm 2.5}
\notag\omega_{wp}=\sum_{i=1}^{k}d\ell_{{\alpha}_{i}}\bigwedge d\tau_{{\alpha}_{i}}.
    \end{equation}

\end{theorem}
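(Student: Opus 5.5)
The approach is to follow Wolpert's original argument for the Fenchel--Nielsen formula, replacing each ingredient by its cone-point analogue. The key input is that the hypothesis $\theta_j\in(0,\pi]$ guarantees a pants decomposition $\{\alpha_1,\dots,\alpha_k\}$ of $S_{g,m,n}(\overrightarrow L,\overrightarrow \theta)$ by simple closed geodesics, by \cite{Zhang2004}. Each generalized pair of pants has three holes, each of which is a geodesic boundary or a cone point. Such a piece is rigid once its boundary lengths and cone angles are fixed: it is the double of a right-angled hexagon, degenerated to a pentagon or quadrilateral when cone points are present. Hence the Fenchel--Nielsen map $X\mapsto(\ell_{\alpha_i}(X),\tau_{\alpha_i}(X))$ is a real-analytic diffeomorphism onto $\mathbb R_+^k\times\mathbb R^k$ with $k=3g-3+m+n$. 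The Weil--Petersson form on $\mathcal T_{g,m,n}(\overrightarrow L,\overrightarrow\theta)$ is taken to be Goldman's symplectic form on the relative character variety of representations $\pi_1\to PSL(2,\mathbb R)$. Here boundary holonomies are hyperbolic with fixed trace, and cone holonomies are elliptic with fixed rotation angle $\theta_j$.

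\textbf{Step 1: the twist flows are the length Hamiltonian flows.} For each $\alpha_i$, show that the Fenchel--Nielsen twist along $\alpha_i$ is the Hamiltonian flow of $\ell_{\alpha_i}$ with respect to $\omega_{wp}$. This gives $\omega_{wp}(\partial_{\tau_i},\cdot)=d\ell_{\alpha_i}$, up to a normalizing constant fixed by the convention $\tau$ = length of twist. I would use Goldman's formula for the Hamiltonian flow of a trace function $f(\rho(\gamma))$ along a simple curve $\gamma$. That formula is a statement about the fundamental group and the Killing form, so it holds verbatim in the relative setting. The only condition is that the deformation fixes the conjugacy classes of the peripheral elements, and a twist along an interior curve $\alpha_i$ does not change the holonomy around the cone points or the boundary.

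\textbf{Step 2: Wolpert's duality formula.} From Step 1, $\omega_{wp}(\partial_{\tau_i},\partial_{\tau_j})=\partial_{\tau_j}\ell_{\alpha_i}=0$, because the $\alpha_i$ are disjoint. Likewise $\omega_{wp}(\partial_{\tau_i},\partial_{\ell_j})=\delta_{ij}$. It remains to show $\omega_{wp}(\partial_{\ell_i},\partial_{\ell_j})=0$. As in Wolpert's argument, I would show that the length functions Poisson-commute: $\{\ell_{\alpha_i},\ell_{\alpha_j}\}=\partial_{\tau_i}\ell_{\alpha_j}=0$. It follows that $\mathrm{span}\{\partial_{\tau_i}\}$ is Lagrangian and is the symplectic dual of $\{d\ell_i\}$. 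Then $\omega_{wp}=\sum d\ell_i\wedge d\tau_i+\sum c_{ij}\,d\ell_i\wedge d\ell_j$, and the $c_{ij}$ must still be shown to vanish.

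\textbf{Step 3 (main obstacle): killing the $d\ell\wedge d\ell$ terms.} Since $\omega_{wp}$ is closed and invariant under all twist flows, the coefficients $c_{ij}$ depend only on the $\ell$'s. I would use the anti-holomorphic reflection trick, exploiting the reflection symmetry of the pants pieces. Consider the involution of Teichm\"uller space that reverses orientation of the surface, $\tau\mapsto-\tau$ with $\ell$ fixed. It sends $\omega_{wp}$ to $-\omega_{wp}$. Each generalized pants, including those containing cone points, admits an orientation-reversing isometry fixing the seams, so this involution is well defined. Applying it, the form $\sum d\ell\wedge d\tau$ changes sign while $\sum c_{ij}\,d\ell_i\wedge d\ell_j$ is unchanged. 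Therefore $c_{ij}=0$.

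The delicate point is justifying that Goldman's form on the relative character variety with elliptic peripheral holonomy is nondegenerate and coincides with the Weil--Petersson form on the cone-surface Teichm\"uller space. For angles in $(0,\pi]$ I would cite \cite{MR1844999} and the cone-manifold deformation theory, for which the hypothesis $\theta\le\pi$ is exactly what ensures the geodesic representatives of the $\alpha_i$ avoid the cone points. With that in hand, Steps 1--3 give the stated formula.
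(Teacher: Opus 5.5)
The paper gives no argument of its own for this statement. It simply cites p.~3246 of Nakanishi--N\"a\"at\"anen \cite{MR1844999}, whose setting (as its title indicates) is two-dimensional moduli spaces, i.e.\ a single pants curve. There every $2$-form is $f\,d\ell\wedge d\tau$, and the duality $\omega_{wp}(\partial_\tau,\cdot)=\pm d\ell$ alone forces $f=\pm1$.

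Your proposal takes a genuinely different and more complete route: you rerun Wolpert's argument on the relative character variety. Goldman's Hamiltonian-flow formula gives the duality for every $\alpha_i$. Twist-invariance then reduces $\omega_{wp}$ to $\sum d\ell_i\wedge d\tau_i+\sum c_{ij}(\ell)\,d\ell_i\wedge d\ell_j$. Finally, the anti-symplectic involution $(\ell,\tau)\mapsto(\ell,-\tau)$, induced by a seam-preserving orientation-reversing homeomorphism, kills the $c_{ij}$.

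Step~3 is precisely what the citation does not supply once $k\ge2$. So your argument buys the general $(g,m,n)$ statement that Section 3 of the paper relies on. The cost is importing Goldman's formula into the elliptic-peripheral setting, and checking that the zero-twist surface has a global reflection fixing every seam. It does: each generalized pants is the double of its polygon, and gluing maps that match seam endpoints commute with the reflections. Note also that you do not need nondegeneracy of Goldman's form as an input. Goldman's formula computes $\iota_{\partial_{\tau_i}}\omega_{wp}$ directly, and nondegeneracy then follows from the final formula.

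Two small corrections. First, the Poisson-commutation remark in Step~2 only re-derives $\omega_{wp}(\partial_{\tau_i},\partial_{\tau_j})=0$. It says nothing about $\omega_{wp}(\partial_{\ell_i},\partial_{\ell_j})$, which is handled entirely by Step~3.

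Second, $\theta_j\le\pi$ does not quite ensure that geodesic representatives avoid the cone points. A curve enclosing exactly two cone points of angle $\pi$ cannot bound a generalized pants, since Gauss--Bonnet would give that pants area $2\pi-\pi-\pi=0$. Its geodesic representative is instead the doubled arc joining the two cone points. At this endpoint the Fenchel--Nielsen set-up, which the paper also takes for granted, has to be modified. The degenerate piece is obtained by folding a boundary circle of length $\ell$ by a reflection, and the fold position plays the role of the twist. With that modification your Steps~1--3 go through unchanged.
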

And by Theorem 2.2 in \cite{MR1844999}, we know that $\omega_{wp}$ is invariant under the action of mapping class group. Thus it carries a volume form invariant under the action of the mapping class group, which is defined by
\begin{equation}\notag
    v_{wp}=\frac{1}{(3g-3+n)!}\omega_{wp}\wedge\cdots\wedge\omega_{wp}.
\end{equation}
\subsection{Twisting}\label{subsec3}
Given a simple closed geodesic $\alpha$ on $X\in \mathcal{T}_{g,m,n}(\overrightarrow L,\overrightarrow \theta)$ and $t\in \mathbb{R}$, we define the twisting deformation $tw_{\alpha}^{t}$ of $X$ as follows.

Cutting the surface $X$ along $\alpha$, turning the left hand side of $\alpha$ in the positive direction by distance $t$ and reglue back, we obtained a new surface. The new surface is denoted by $tw_{\alpha}^{t}(X)$. As $t$ varies continuously, the resulting continuous path in the Teichm\"uller space $X\in \mathcal{T}_{g,m,n}(\overrightarrow L,\overrightarrow \theta)$ is the Fenchel-Nielsen deformation of $X$ along $\alpha$. For $t=\ell_{\alpha}(X)$, we have:
\begin{equation}\notag
    tw_{\alpha}^{t}(X)=\phi_{\alpha}(X),
\end{equation}
where $\phi_{\alpha}$ is a right Dehn twist about $\alpha$.

Because we have $\ell_{\alpha}(tw_{\alpha}^{t}(X))=\ell_{\alpha}(X)$ for every $t\in[0,\ell_{\alpha}]$, the vector field $Y $generated by twisting around $\alpha$ is $symplectically\ dual$ to the one form $d\ell_{\alpha}$, which means $\omega_{wp}(Y,.)=d\ell_{\alpha}$. Note that $\omega_{wp}$ is invariant under the action of the mapping class group. Therefore, the level set $\mathcal{M}_{g,m,n}^{\gamma}(\overrightarrow L,\overrightarrow \theta,x)=\ell^{-1}_{\alpha}(x)$
carries a natural volume form $v_x.$
\subsection{Spliting}\label{subsect4}
A complex $\gamma=\sum_{i=1}^{k}c_{i}\gamma_{i}\ (c_{i}\in \mathbb{R}_{+})$ is called a multi-curve on $S_{g,m,n}$ if $\gamma_{i}$'s are disjoint, essential, non-peripheral simple closed curves which are not homotopic to each other.
Given $\gamma_{i}$'s with length-vector $\overrightarrow L=(\ell_1,\cdots,\ell_m)$, we can study the surface $S_{g,m,n}(\gamma)$ obtained by cutting $S_{g,m,n}$ along $\gamma_{1},\cdots,\gamma_{k}$. Then $S_{g,m,n}(\gamma)$ is a (possibly disconnected) surface with $m+2k$ boundary components and $s(\gamma)$ connected components. Every connected component gives rise to two boundary components, $\gamma_{i}^{1}$ and $\gamma_{i}^{2}$ on $S_{g,m,n}(\gamma)$. And we have:
\begin{equation}\notag
    \partial(S_{g,m,n}(\gamma))=\{\beta_{1},\cdots,{\beta}_{m},\ \theta_{1},\cdots,\theta_{n}\}\bigcup\{{\gamma_{1}^1,\gamma_{1}^2\cdots,\gamma_{k}^1,\gamma_{k}^2}\},
\end{equation}

where $\beta$'s are the boundary components of $S_{g,m,n}$.
Given $\Gamma=(\gamma_{1},\cdots,\gamma_{k})$, $\overrightarrow L=(\ell_{1},\cdots,\ell_{k})$, $\overrightarrow \beta=(\beta_{1},\cdots,{\beta}_{m})$ and $\overrightarrow{x}=(x_1,\cdots,x_k)\in \mathbb{R}_{+}^{k}$, we consider the moduli space
\begin{equation}\notag
    \mathcal{M}(S_{g,m,n}(\gamma),\ \ell_{\Gamma}=\overrightarrow{x},\ \ell_{\overrightarrow\beta}=\overrightarrow L)
\end{equation}
of hyperbolic Riemann surfaces homeomorphic to $S_{g,m,n}(\gamma)$.

We define $V_{g,m,n}(\Gamma,\overrightarrow{x},\overrightarrow \beta,\overrightarrow L)$ as following,
\begin{equation}\notag
    V_{g,m,n}(\Gamma,\ \overrightarrow{x},\ \overrightarrow \beta,\ \overrightarrow L)=\operatorname{Vol}(\mathcal{M}(S_{g,m,n}(\gamma),\ \ell_{\Gamma}=\overrightarrow{x},\ \ell_{\overrightarrow\beta}=\overrightarrow L)),
\end{equation}
which is the volume of $\mathcal{M}(S_{g,m,n}(\gamma),\ \ell_{\Gamma}=\overrightarrow{x},\ \ell_\beta=\overrightarrow L).$
If $S_{g,m,n}(\gamma)$ is divided into $s$ connected components, we can write it as a union of these simpler surfaces:
\begin{equation}\label{2.0}
    S_{g,m,n}(\gamma)=\bigcup_{i=1}^{s}S_{g_i,m_i,n_i}, \ A_{i}=\partial S_{g_i,m_i,n_i}\in\partial(S_{g,m,n}(\gamma)),
\end{equation}
where $\sum_i^sm_i=m+2k,\ \sum_i^sn_i=n.$ And we have:
\begin{equation}\label{2.1}
    \mathcal{M}(S_{g,m,n}(\gamma),\ \ell_{\Gamma}=\overrightarrow{x},\ \ell_{\overrightarrow\beta}=\overrightarrow L)\cong \prod_{i=1}^{s}\mathcal{M}_{g_i,m_i,n_i}(\ell_{A_i}).
\end{equation}
Then we have:
\begin{equation}\label{2.2}
    V_{g,m,n}(\Gamma,\ \overrightarrow{x},\ \overrightarrow \beta,\ \overrightarrow L)= \prod_{i=1}^{s}V_{g_i,m_i,n_i}(\ell_{A_i}).
\end{equation}

\section{Integration over the moduli space}\label{sec3}

\subsection{Integration formula}\label{subsec 3.1}
Mirzakhani \cite{MR2264808} obtained the formula of the integration over the moduli space of the hyperbolic surface with geodesic boundaries. We'll generalize her results to a compact hyperbolic surface with cone points.
For a given set $A$ of homotopy classes of simple closed curves on $S_{g,m,n},$ $\operatorname{Stab}(A)$ is defined by:
\begin{equation}\notag
    \operatorname{A}=\{h\in Mod_{g,m,n}|h\cdot A=A\}\subset Mod_{g,m,n}.
\end{equation}
For a multi-curve $\gamma=\sum_{i=1}^{k}c_{i}{\gamma}_{i}$ on $S_{g,m,n}$, we define the symmetry group of $\gamma$, $Sym(\gamma)$, by
\begin{equation}\notag
Sym(\gamma)/\cap_{i=1}^{k}\operatorname(\gamma_i).
\end{equation}

We can denote the length function of $\gamma$ on $\mathcal{M}_{g,m,n}$ as
\begin{equation}\notag
    \ell_{\gamma}(X)=\sum_{i=1}^{k}c_{i}\ell_{{\gamma}_{i}}(X),
\end{equation}
where $\ell_{{\gamma}_{i}}(X)$ is the length of ${\gamma}_{i}$ on $X\in \mathcal{M}_{g,m,n}$.

Given a function $f: \mathbb{R}_{+} \longrightarrow \mathbb{R}_{+}$,
\begin{equation}\notag
    f_{\gamma}(X)=\sum_{[\alpha] \in \mathcal{O}_{\gamma}}f(\ell_{\alpha}(X)),
\end{equation}
we can define a function $f_{\gamma}$:
\begin{equation}\notag
    f_{\gamma}:\mathcal{M}_{g,m,n} \longrightarrow \mathbb{R}_{+}.
\end{equation}
Now, we recall Mirzakhani's integral formula on $\mathcal{M}_{g,n}(\overrightarrow{L})$.
	
	\begin{theorem}\cite[Theorem 7.1]{MR2264808}
		For any multi-curve $\gamma=\sum_{i=1}^{k}c_i\gamma_i$, the integral of $f_{\gamma}$ over $\mathcal{M}_{g,n}(\overrightarrow{L})$ with respect to the Weil-Petersson volume form is given by
		\begin{equation}
			\notag\int_{\mathcal{M}_{g,n}(\overrightarrow{L})}f_{\gamma}(X)dX=\frac{2^{-M(\Gamma)}}{|Sym(\gamma)|}\int_{x\in\mathbb{R}^k_+}f(|x|)V_{g,n}(\Gamma,\overrightarrow x,\overrightarrow \beta,\overrightarrow L)\overrightarrow x\cdot d\overrightarrow x,
		\end{equation}
		where $\Gamma=(\gamma_1,\dots,\gamma_k)$, $|\overrightarrow x|=\sum_{i=1}^{k}c_ix_i$, $\overrightarrow x\cdot d\overrightarrow x=x_1\dots x_k\cdot dx_1\wedge\dots\wedge dx_k$ and
		\begin{equation}
			\notag	M(\Gamma):=|\{i|\gamma_i \text{ separates off a one-handle from } S_{g,n}\}|,
		\end{equation}
		and $V_{g,n}(\Gamma,\overrightarrow x,\overrightarrow \beta,\overrightarrow L)$ is defined by
		\begin{equation}
			\notag V_{g,n}(\Gamma,\overrightarrow x,\overrightarrow \beta,\overrightarrow L):=\operatorname{Vol}\left(\mathcal{M}\left(S_{g,n}(\gamma),\ell_{\Gamma}=\overrightarrow x,\ell_{\overrightarrow \beta}=\overrightarrow L\right)\right)=\prod_{i=1}^{s}V_{g_i,n_i}(\ell_{A_i}).
		\end{equation}
	\end{theorem}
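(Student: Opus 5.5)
The plan is to prove Mirzakhani's integration formula by unfolding the integral over $\mathcal{M}_{g,n}(\overrightarrow{L})$ to an intermediate cover, and then integrating along the fibres of a torus bundle using Fenchel--Nielsen coordinates. Let $\mathcal{O}_\gamma=Mod_{g,n}\cdot\gamma$ be the orbit of $\gamma$. Consider the cover
\begin{equation}\notag
\mathcal{M}_{g,n}(\overrightarrow L)^{\gamma}=\{(X,\alpha)\,:\,X\in\mathcal{M}_{g,n}(\overrightarrow L),\ \alpha\in\mathcal{O}_\gamma\}=\mathcal{T}_{g,n}(\overrightarrow L)/\operatorname{Stab}(\gamma).
\end{equation}
The function $f_\gamma$ on $\mathcal{M}_{g,n}(\overrightarrow L)$ is the pushforward of the function $(X,\alpha)\mapsto f(\ell_\alpha(X))$ on this cover. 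Since the Weil--Petersson form is mapping-class-group invariant, we get
\begin{equation}\notag
\int_{\mathcal{M}_{g,n}(\overrightarrow L)}f_\gamma\,dX=\int_{\mathcal{M}_{g,n}(\overrightarrow L)^{\gamma}}f(\ell_\gamma(Y))\,dY .
\end{equation}
This step is formal, apart from keeping track of the orbifold factor coming from elements that act trivially.

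Next, complete $\gamma_1,\dots,\gamma_k$ to a pants decomposition. By Theorem \ref{thm 2.6} (in the boundary case this is Wolpert's theorem), $\omega_{wp}=\sum d\ell_{\alpha_i}\wedge d\tau_{\alpha_i}$. Separate out the coordinates $(\ell_{\gamma_i},\tau_{\gamma_i})_{i\le k}$. The level set $\ell_\Gamma^{-1}(\overrightarrow x)$ in $\mathcal{T}_{g,n}(\overrightarrow L)/\cap_i\operatorname{Stab}(\gamma_i)$ fibres over
\begin{equation}\notag
\mathcal{M}(S_{g,n}(\gamma),\ell_\Gamma=\overrightarrow x,\ell_{\overrightarrow\beta}=\overrightarrow L).
\end{equation}
The fibre is the torus of twist parameters $\tau_{\gamma_i}\in[0,x_i)$, because the Dehn twist $\phi_{\gamma_i}$ equals $tw^{x_i}_{\gamma_i}$ as explained in the subsection on twisting. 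The twist flows preserve $\ell_\Gamma$ and are symplectically dual to $d\ell_{\gamma_i}$, so the volume form splits as $dx\wedge d\tau\wedge v_x$. Integrating over the fibre gives $x_1\cdots x_k$, and the base contributes $V_{g,n}(\Gamma,\overrightarrow x,\overrightarrow\beta,\overrightarrow L)$, which factors as $\prod V_{g_i,n_i}(\ell_{A_i})$ by \eqref{2.1} and \eqref{2.2}.

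Finally, pass from $\cap_i\operatorname{Stab}(\gamma_i)$ to $\operatorname{Stab}(\gamma)$. This is a quotient by $Sym(\gamma)$, which gives the factor $1/|Sym(\gamma)|$. Because $f$ is evaluated at $|x|=\sum c_ix_i$, the integrand depends only on $\overrightarrow x$, and we obtain the stated formula.

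I expect the main obstacle to be the factor $2^{-M(\Gamma)}$. When $\gamma_i$ bounds a one-handle $S_{1,1}$, the elliptic involution of that torus commutes with all twists. It acts trivially on $\mathcal{T}_{1,1}$, but it is a nontrivial element of the mapping class group. As a result, the fibre computation double counts relative to the orbifold volume convention for $V_{1,1}$. I would handle this by fixing the convention that the orbifold volume counts generic points with weight one. I would then check directly on $S_{1,1}$ that the fibration over $\mathcal{M}_{1,1}(x)$ has generic stabilizer $\mathbb{Z}/2$ not seen in the twist torus. This gives exactly one factor $\tfrac12$ per such $\gamma_i$ and no other hidden stabilizers. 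For the cone-point generalization needed later in the paper, the same argument goes through verbatim once Theorem \ref{thm 2.6} supplies Fenchel--Nielsen coordinates for angles in $(0,\pi]$. One must check that no analogous involution arises for $S_{1,0,1}$ with a cone point; I would argue that the hyperelliptic involution still acts, so the factor $2^{-M(\Gamma)}$ persists when $\gamma_i$ cuts off a torus with one cone point.
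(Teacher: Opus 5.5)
Your route is the one the paper takes over from Mirzakhani (Lemma~\ref{lem 3.3} and the lemma following it):
\begin{enumerate}
\item unfold to $\mathcal{T}_{g,n}(\overrightarrow L)/\operatorname{Stab}(\gamma)$;
\item pass to $G_\Gamma=\bigcap_i\operatorname{Stab}(\gamma_i)$;
\item fibre $\ell_\Gamma^{-1}(\overrightarrow x)$ by twist tori over the cut moduli space;
\item correct by $2^{-M(\Gamma)}$ and divide by $|Sym(\gamma)|$.
\end{enumerate}
The gaps are in the step you single out.

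\textbf{The factor $2^{-M(\Gamma)}$.} The mechanism you propose does not produce it. An element acting trivially on Teichm\"uller space cannot change a volume that counts generic points with weight one, which is the normalization $V_{1,1}(L)=\frac{1}{24}(L^2+4\pi^2)$. The actual reason \cite[p.~214]{MR2264808} is that the elliptic involution of the one-handle $T_i$ cut off by $\gamma_i$ rotates $\gamma_i$ by half its length. The mapping class of $S_{g,n}$ realizing it is, e.g., $(\phi_a\phi_b)^3$ for curves $a,b\subset T_i$ meeting once; its square is $\phi_{\gamma_i}$ by the chain relation. This element lies in $G_\Gamma$ and acts freely on $\ell_\Gamma^{-1}(\overrightarrow x)$ as $\tau_{\gamma_i}\mapsto\tau_{\gamma_i}+x_i/2$. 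Hence the fibre is the twist torus with its $i$-th circle of length $x_i/2$. The extra $\mathbb{Z}/2$ is precisely a symmetry of the twist torus, not something ``not seen'' in it, and this is what has to be checked.

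\textbf{Hidden stabilizers.} Your assertion that there are ``no other hidden stabilizers'' is false. $\operatorname{Stab}(\gamma_i)$ stabilizes the unoriented curve, as it must for $f_\gamma$ to be a sum over geodesics. So $G_\Gamma$ can contain mapping classes that reverse a nonseparating $\gamma_i$, or that exchange the two homeomorphic sides of a separating one. These act on $S_{g,n}(\gamma)$ by swapping the labels of $\gamma_i^1$ and $\gamma_i^2$. The $T^k$-quotient of $\ell_\Gamma^{-1}(\overrightarrow x)/G_\Gamma$ is therefore the quotient of the labelled product $\prod_j\mathcal{M}_{g_j,n_j}(\ell_{A_j})$ by this relabelling, not the product itself. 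When the relabelling is generically free, this costs another factor $\frac12$.

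For example, take a single nonseparating $\gamma$ on a closed surface of genus $g\ge 3$. A generic point of $\mathcal{M}_{g-1,2}(x,x)$ has no isometry exchanging the two boundaries, so one gets $\int_{\mathcal{M}_{g,0}}f_\gamma=\frac12\int_0^\infty f(x)V_{g-1,2}(x,x)\,x\,dx$. The formula as quoted, with $M(\Gamma)=0$ and $|Sym(\gamma)|=1$, has no $\frac12$. The base/fibre identification must therefore be made for the actual group $G_\Gamma$, keeping track of which elements exchange the two sides of each $\gamma_i$.

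The paper's Lemma~\ref{lem 3.3} passes over the same point. This is harmless in the McShane-identity applications, where the pair of pants containing the distinguished boundary or cone point pins the sides of $\alpha$ and $\beta$.

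\textbf{Cone points.} Your closing remark is off. A torus with one boundary and one cone point has no generic involution: the fixed locus is $2$-dimensional in a $4$-dimensional Teichm\"uller space. So $M(\Gamma)$ should still count only those $\gamma_i$ that bound genuine one-handles without cone points.
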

    Based on the pants decomposition of the surface, we can generalize Theorem 7.1 of Mirzakhani's article \cite{MR2264808} to the case of cone surfaces. We first state it as following:
    \begin{lemma}\label{lem 3.2}
    For any multicurve $\gamma=\sum_{i=1}^{k}c_{i}\ell_{{\gamma}_{i}}(X)$, the integral of $f_{\gamma}$ over $\mathcal{M}_{g,m,n}$ with respect to the Weil-Petersson volume form is given by:
\begin{equation}\notag
    \int_{\mathcal{M}_{g,m,n}(\overrightarrow L,\overrightarrow\theta)}f_{\gamma}(X)dX=\frac{2^{-M(\Gamma)}}{|Sym(\gamma)|}\int_{\overrightarrow{x}\in \mathbb{R}^{k}_{+}}f(|\overrightarrow{x}|)V_{g,m,n}(\Gamma,\ \overrightarrow{x},\ \overrightarrow \beta,\ \overrightarrow L)\overrightarrow{x}d\overrightarrow{x},
\end{equation}
where $\Gamma=(\gamma_1,\cdots,\gamma_k)$, $|\overrightarrow{x}|=\sum_{i=1}^{k}c_{i}$, $\overrightarrow{x}\cdot d\overrightarrow{x}=x_1\cdots x_k \cdot dx_1\bigwedge\cdots\bigwedge dx_k$
and
\begin{equation}\notag
    M(\Gamma)=|\{i|\gamma_{i}\ \text{separates\ off\ a\ one-handle\ from}\ S_{g,m,n}\}|.
\end{equation}
    Also,

    \begin{equation}\notag
			\notag V_{g,m,n}(\Gamma,\overrightarrow x,\overrightarrow \beta,\overrightarrow L):=\operatorname{Vol}\left(\mathcal{M}\left(S_{g,m,n}(\gamma),\ell_{\Gamma}=\overrightarrow x,\ell_{\overrightarrow \beta}=\overrightarrow L\right)\right)=\prod_{i=1}^{s}V_{g_i,m_i,n_i}(\ell_{A_i}),
		\end{equation}
where
\begin{equation}\notag
    S_{g,m,n}(\gamma)=\bigcup_{i=1}^{s}S_i.
\end{equation}
    $S_i\cong S_{g_i,n_i}$, and $A_i=\partial S_{i}$.
\end{lemma}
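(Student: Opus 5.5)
We follow Mirzakhani's proof of \cite[Theorem 7.1]{MR2264808} essentially verbatim, using the cone-surface inputs stated earlier. The strategy is to unfold the sum defining $f_\gamma$ into an integral over an intermediate cover of $\mathcal{M}_{g,m,n}(\overrightarrow L,\overrightarrow\theta)$, and then integrate first along the twist directions and then along the lengths.

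\textbf{Step 1 (unfolding).} Write $\mathcal{O}_\gamma=Mod_{g,m,n}\cdot\gamma$ and consider the cover
\[
\mathcal{M}_{g,m,n}(\overrightarrow L,\overrightarrow\theta)^{\gamma}=\{(X,\alpha): X\in\mathcal{M}_{g,m,n}(\overrightarrow L,\overrightarrow\theta),\ \alpha\in\mathcal{O}_\gamma\}\cong \mathcal{T}_{g,m,n}(\overrightarrow L,\overrightarrow\theta)/\operatorname{Stab}(\gamma).
\]
The function $(X,\alpha)\mapsto f(\ell_\alpha(X))$ on this cover pushes forward to $f_\gamma$. Since $\omega_{wp}$ is $Mod_{g,m,n}$-invariant (Theorem 2.2 of \cite{MR1844999}), we get
\[
\int_{\mathcal{M}_{g,m,n}(\overrightarrow L,\overrightarrow\theta)}f_\gamma\,dX=\int_{\mathcal{M}^{\gamma}}f(\ell_\gamma(Y))\,dY.
\]

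\textbf{Step 2 (coordinates adapted to $\gamma$).} Because all cone angles lie in $(0,\pi]$, the pants decomposition result of \cite{Zhang2004} lets us complete $\gamma_1,\dots,\gamma_k$ to a pants decomposition of $S_{g,m,n}$ whose pants may contain cone points. In the resulting Fenchel--Nielsen coordinates, Theorem \ref{thm 2.6} gives $\omega_{wp}=\sum d\ell_{\alpha_i}\wedge d\tau_{\alpha_i}$. Hence the level set $\ell_\Gamma^{-1}(\overrightarrow x)$ in $\mathcal{M}^\gamma$ carries the volume form $v_{\overrightarrow x}$ of Subsection \ref{subsec3}, and
\[
dY = dx_1\wedge\cdots\wedge dx_k\wedge d\tau_1\wedge\cdots\wedge d\tau_k\wedge(\text{form on the cut surface}).
\]

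\textbf{Step 3 (fibration).} Let $G=\cap_i\operatorname{Stab}(\gamma_i)$. The map $\ell_\Gamma^{-1}(\overrightarrow x)\to\mathcal{M}(S_{g,m,n}(\gamma),\ell_\Gamma=\overrightarrow x,\ell_{\overrightarrow\beta}=\overrightarrow L)$ forgets the twists. The group $\operatorname{Stab}(\gamma)$ contains $G$ with quotient $Sym(\gamma)$, which produces the factor $1/|Sym(\gamma)|$. Modulo $G$, the fibre over a point is the torus $\prod_i \mathbb{R}/x_i\mathbb{Z}$ of twists, because the Dehn twist about $\gamma_i$ acts as $tw^{x_i}_{\gamma_i}$. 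This torus has volume $x_1\cdots x_k$.

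The one correction is the factor $2^{-M(\Gamma)}$. When $\gamma_i$ bounds a one-handle $S_{1,1,0}$, the elliptic involution of that one-handle fixes the curve and acts as a half twist. This halves the fibre, exactly as in Mirzakhani's argument. I need to check that no new such phenomenon arises from cone points. For instance, a component $S_{0,1,2}$ with two equal cone angles carries an involution, but that involution swaps the two cone points, which are labelled, so it is not in $Mod_{g,m,n}$. Hence only the one-handle case contributes.

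\textbf{Step 4 (integration).} Integrating over the torus fibre first and then over the base gives $x_1\cdots x_k\,V_{g,m,n}(\Gamma,\overrightarrow x,\overrightarrow\beta,\overrightarrow L)$. Here I use \eqref{2.1} and \eqref{2.2} to factor the base into $\prod_i V_{g_i,m_i,n_i}(\ell_{A_i})$, where the cone points are distributed among the components. Integrating over $\overrightarrow x\in\mathbb{R}_+^k$ with $f(|\overrightarrow x|)$ then yields the formula.

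\textbf{Main obstacle.} The hardest part is Step 3: the precise identification of the fibre and of the symmetry groups in the presence of cone points. This requires that the twist flows along the $\gamma_i$ be well defined and complete on the cone-surface Teichm\"uller space, and that the stabilizer structure match the cusped case. I would justify both by treating each cone point of angle $\le\pi$ as a labelled marked point whose neighbourhood is contained in a single pant, so that the local models coincide with Mirzakhani's.
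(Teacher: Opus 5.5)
Your proposal is correct and follows the paper's route, which is Mirzakhani's proof of her Theorem 7.1 carried over to cone surfaces. Both unfold $f_\gamma$ to a cover, use the Darboux form of $\omega_{wp}$ in cone Fenchel--Nielsen coordinates to fibre each level set $\ell_\Gamma^{-1}(\overrightarrow x)$ over $\prod_i\mathcal{M}_{g_i,m_i,n_i}(\ell_{A_i})$ by twist tori of volume $x_1\cdots x_k$ (halved for each one-handle), and obtain $1/|Sym(\gamma)|$ from the covering $\mathcal{T}/G_\Gamma\to\mathcal{T}/\operatorname{Stab}(\gamma)$. Your version is slightly cleaner in two respects: you place the degree-$|Sym(\gamma)|$ covering over $\mathcal{T}/\operatorname{Stab}(\gamma)$, whereas the paper loosely describes it as a covering of $\mathcal{M}_{g,m,n}(\overrightarrow L,\overrightarrow\theta)$, and you explicitly check that labelled cone points create no half-twist symmetries beyond the one-handle case.
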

Now we sketch how we extend the above formula over the moduli space of cone surfaces.

Fix a multi-curve $\gamma=\sum_{i=1}^{k}c_{i}\gamma_{i}\ (c_{i}\in \mathbb{R}_{+})$ on $S_{g,m,n}$. We denote $\Gamma=(\gamma_1,\cdots,\gamma_k).$

For an element $h\in Mod_{g,m,n},$ it acts on $\Gamma$ by
\begin{equation}\notag
    h\cdot\Gamma=(h\cdot\gamma_1,\cdots,h\cdot\gamma_k).
\end{equation}
Let $\mathcal{O}_\Gamma$ be the set of homotopic classes of elements of $Mod_{g,m,n}\cdot\Gamma$. And we let $\mathcal{M}$ be the set:
\begin{equation}\notag
    \mathcal{M}_{g,m,n}(\overrightarrow{L},\overrightarrow \theta)^{\Gamma}=\{(X,\eta)|X\in\mathcal{M}_{g,m,n}(\overrightarrow{L},\overrightarrow \theta),\eta=(\eta_1,\dots,\eta_k)\in\mathcal{O}_{\Gamma}\}.
\end{equation}

Thus we have:
\begin{equation}\notag
    \mathcal{M}_{g,m,n}(\overrightarrow L,\overrightarrow \theta)^{\Gamma}=\mathcal{T}_{g,n}(\overrightarrow L)/G_{\Gamma},
\end{equation}
where $G_{\Gamma}=\bigcap_{i=1}^{k}Stab(\gamma_i)\subset Mod_{g,m,n}$.

Now we define the covering map $\pi^{\Gamma}:\mathcal{M}_{g,m,n}(\overrightarrow L,\overrightarrow \theta)^{\Gamma}\longrightarrow\mathcal{M}_{g,m,n}(\overrightarrow L,\overrightarrow \theta)$ by
\begin{equation}\notag
    \pi^{\Gamma}(X,\eta):=X.
\end{equation}

Next we consider the twisting on $\Gamma.$ Let $\ell_{\Gamma}:\mathcal{T}_{g,m,n}\longrightarrow\mathbb{R}_{+}^{k}$ be:
\begin{equation}\notag
    \ell_{\Gamma}(X)=(\ell_{\gamma_1}(X),\cdots,\ell_{\gamma_k}(X)).
\end{equation}
Given $\overrightarrow x=(x_1,\cdots,x_k)\in\mathbb{R}_{+}^{k},$ the Weil-Petersson volume form induces a natural volume form on $\mathcal{M}_{g,m,n}(\overrightarrow L,\overrightarrow \theta)^{\Gamma}[\overrightarrow x]=\ell_{\Gamma}^{-1}(\overrightarrow x),$ for $\omega_{wp}$ is invariant under the mapping class group.

Let $\phi^{t}_{\gamma_i}$ be the twisting deformation along $\gamma_i$ with length $t$, which is given by $\phi_{\gamma_i}^{t}(X)=tw_{\gamma_i}^{t\cdot \ell_{\gamma_i}(X)}$. Then we can define the twisting on $\Gamma$ by
\begin{equation}\notag
\phi_{\gamma}^{(t_1,t_2,\cdots,t_k)}:=\prod_{i=1}^k\phi_{\gamma_i}^{t_i},
\end{equation}
which is a bijection from $\mathcal{T}_{g,m,n}$ to $\mathcal{T}_{g,m,n}$ and preserves the Weil-Petersson symplectic form $\omega_{wp}.$

On the other hand, by cutting $X\in \ell_{\Gamma}^{-1}(x_1,\cdots,x_k)$ along $\Gamma=(\gamma_1,\cdots,\gamma_{k}),$ we obtain a surface $s_\gamma(X)\in \mathcal{T}(S_{g,m,n}(\gamma),\ell_{\Gamma}=\overrightarrow x,\ell_{\beta}=\overrightarrow L)$ with geodesic boundary components as in (\ref{2.0}). Note that for any $(t_1,\cdots,t_k)\in \mathbb{R}_{+}^{k},$ we have:
\begin{align}
s_{\gamma}(X)=s_{\gamma}(\phi_{\gamma}^{(t_1,\cdots,t_k)}(X)).\label{3.1}
\end{align}
Then we consider the quotient space :
\begin{equation}
    \mathcal{M}_{g,m,n}(\overrightarrow L,\overrightarrow \theta)^{\Gamma*}[\overrightarrow x]=\mathcal{M}_{g,m,n}(\overrightarrow L,\overrightarrow \theta)[\overrightarrow x]^{\Gamma}/(T^k),\label{3.2}
\end{equation}
where $T^k=\prod^{k}_{i=1}S^1$, inherits a symplectic structure of $\mathcal{M}_{g,m,n}(\overrightarrow L,\overrightarrow \theta)$. 
By formula (\ref{3.1}), we know that $s_{\gamma}(X)$ is invariant under the twisting deformation $\phi_{\gamma}^{(t_1,\cdots,t_k)}$. Since the map $s_{\gamma}$ is mapping class group equivariant (which means that for any $h\in Mod_{g,m,n}$, we have $s_{\gamma}(h(X))=h(s_{\gamma}(X))$, it induces a map on $\mathcal{M}_{g,m,n}(\overrightarrow L,\overrightarrow \theta)^{\Gamma}$ as follows. For $(X,\eta)\in \mathcal{M}_{g,m,n}(\overrightarrow L,\overrightarrow \theta)^{\Gamma *}[a],$ let 
\begin{equation}\notag
    s((X,\eta))=s_{\gamma}(X)\in \mathcal{M}(S_{g,m,n}(\gamma),\ell_{\Gamma}=\overrightarrow x,L_{\beta}=\overrightarrow L).
\end{equation}
Now as in formula (\ref{2.1}), let 
\begin{equation}\notag
    S_{g,m,n}(\gamma)=\bigcup_{i=1}^{s}S_{g_i,m_i,n_i}, \ A_{i}=\partial S_{g_i,m_i,n_i}\in\partial(S_{g,m,n}(\gamma)).
\end{equation}
Thus by formula (\ref{2.2}) and formula (\ref{3.2}), we have the following lemma:
\begin{lemma} \label{lem 3.3}
    For any multi-curve $\gamma$, the canonical isomorphism:
    \begin{equation} \notag
        s:\mathcal{M}_{g,m,n}(\overrightarrow L,\overrightarrow \theta)^{\Gamma *}[x]\longrightarrow \mathcal{M}(S_{g,m,n}(\gamma),\ell_{\Gamma}=\overrightarrow x,L_{\beta}=\overrightarrow L)\cong \prod_{i=1}^{s}\mathcal{M}_{g_i,m_i,n_i}(\ell_{A_i})
    \end{equation}
    is a symplectomorphism.
\end{lemma}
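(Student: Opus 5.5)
The plan is to work in Fenchel--Nielsen coordinates adapted to $\Gamma$ and reduce everything to Theorem \ref{thm 2.6}. Since every cone angle lies in $(0,\pi]$, the pants decompositions of \cite{Zhang2004} exist. I first extend $\{\gamma_1,\dots,\gamma_k\}$ to a full pants decomposition $\{\gamma_1,\dots,\gamma_k,\alpha_{k+1},\dots,\alpha_N\}$ of $S_{g,m,n}$, where $N=3g-3+m+n$. Every $\alpha_j$ lies in exactly one component $S_{g_i,m_i,n_i}$ of $S_{g,m,n}(\gamma)$, and the curves $\alpha_j$ lying in $S_{g_i,m_i,n_i}$ form a pants decomposition of that component. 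Each generalized pair of pants involved is one of the pieces allowed in \cite{MR1844999}, meaning some of its holes may be cone points. By Theorem \ref{thm 2.6}, on $\mathcal T_{g,m,n}(\overrightarrow L,\overrightarrow\theta)$ we have
\begin{equation}\notag
\omega_{wp}=\sum_{i=1}^{k}d\ell_{\gamma_i}\wedge d\tau_{\gamma_i}+\sum_{j=k+1}^{N}d\ell_{\alpha_j}\wedge d\tau_{\alpha_j}.
\end{equation}
Applying the same theorem to each component gives the form on $\prod_i\mathcal T_{g_i,m_i,n_i}(\ell_{A_i})$: it is $\sum_{j>k}d\ell_{\alpha_j}\wedge d\tau_{\alpha_j}$, split according to the component containing each $\alpha_j$. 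Here the boundary lengths $\ell_{A_i}$ are fixed, and they include the $x_i$.

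Next I would identify the level set and its quotient explicitly. On $\ell_\Gamma^{-1}(\overrightarrow x)\subset\mathcal T_{g,m,n}$ the coordinates are $(\tau_{\gamma_1},\dots,\tau_{\gamma_k},\ell_{\alpha_j},\tau_{\alpha_j})$. Restricted to this level set, the $d\ell_{\gamma_i}$ vanish, so the pulled-back form is $\sum_{j>k}d\ell_{\alpha_j}\wedge d\tau_{\alpha_j}$. This form is degenerate exactly along the twist directions $\partial/\partial\tau_{\gamma_i}$, which are the generators of the flows $\phi_{\gamma}^{(t_1,\dots,t_k)}$; this uses the duality $\omega_{wp}(Y,\cdot)=d\ell_\alpha$ from Subsection \ref{subsec3}. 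Hence quotienting by the $\mathbb R^k$ twist action, or equivalently by $T^k$ after passing to $G_\Gamma$ (full Dehn twists lie in $G_\Gamma$), is a symplectic reduction. The reduced form is $\sum_{j>k}d\ell_{\alpha_j}\wedge d\tau_{\alpha_j}$ in the remaining coordinates. The cutting map $s_\gamma$ is well defined on the quotient by (\ref{3.1}). In these coordinates it is simply the projection that forgets $\tau_{\gamma_i}$: the length and twist of each $\alpha_j$ are intrinsic to the component containing it, because twists are measured inside the pants adjacent to $\alpha_j$. So at the Teichm\"uller level $s_\gamma$ pulls back the product form to the reduced form, and is therefore a symplectomorphism onto $\prod_i\mathcal T_{g_i,m_i,n_i}(\ell_{A_i})$.

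Finally I would descend to moduli spaces. The map $s_\gamma$ is equivariant, and the group $G_\Gamma$, modulo the twists along the $\gamma_i$, maps onto $\prod_i Mod_{g_i,m_i,n_i}$. Because both forms are mapping-class invariant (Theorem 2.2 of \cite{MR1844999}), the local symplectomorphism descends to a map $\mathcal M_{g,m,n}(\overrightarrow L,\overrightarrow\theta)^{\Gamma*}[\overrightarrow x]\to\prod_i\mathcal M_{g_i,m_i,n_i}(\ell_{A_i})$. This map is a bijection up to the finite ambiguities (half-twists and one-handle elliptic involutions). Those ambiguities are accounted for by the factor $2^{-M(\Gamma)}$ in Lemma \ref{lem 3.2} and are not visible locally, where the map is a symplectomorphism.

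The main obstacle I expect is the claim that the twist parameters of the $\alpha_j$ in the cone-surface Fenchel--Nielsen coordinates of \cite{MR1844999} are compatible under cutting, that is, that they coincide with the twist parameters computed on the components. With cone points in $(0,\pi]$ the pants with cone holes still have well-defined perpendicular seams, and I would verify this by checking that the twist is defined using only the two pants adjacent to $\alpha_j$, neither of which is altered by cutting along the $\gamma_i$.
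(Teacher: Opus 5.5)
Your argument is correct and follows the route the paper intends. The paper obtains the lemma in one line from the twist-invariance (\ref{3.1}), the equivariance of $s_\gamma$ and the product decomposition (\ref{2.2}), with the symplectic content supplied implicitly by the Fenchel--Nielsen formula of Theorem \ref{thm 2.6}; you carry out that computation explicitly (extend $\Gamma$ to a pants decomposition, reduce along the $\tau_{\gamma_i}$ directions, and observe that $s$ just forgets the $\tau_{\gamma_i}$).

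One correction to your last paragraph: the one-handle half-twist/elliptic-involution ambiguity does not affect whether $s$ is injective. It appears as $\mathbb{Z}/2$ isotropy of the $T^k$-action, so the twist circle in the $\gamma_i$-direction has length $x_i/2$. That is, it lives in the circle fibres of the projection $\pi$ onto $\mathcal{M}_{g,m,n}(\overrightarrow L,\overrightarrow\theta)^{\Gamma*}[\overrightarrow x]$, which is exactly where the paper places the factor $2^{-M(\Gamma)}$.
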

The symplectomorphism means that the symplectic form of $\mathcal{M}_{g,m,n}(\overrightarrow L,\overrightarrow \theta)^{\Gamma *}[a]$ which inherits from the $\mathcal{M}_{g,m,n}(\overrightarrow L,\overrightarrow \theta)$ is equal to the pull-back of the Weil-Petersson symplectic on $\prod_{i=1}^{s}\mathcal{M}_{g_i,m_i,n_i}(\ell_{A_i}).$

Then we can get the integration formula over the covering space $\mathcal{M}_{g,m,n}(\overrightarrow L,\overrightarrow \theta)^{\Gamma}$.
\begin{lemma}
    For any function $F:\mathbb{R}^k\longrightarrow \mathbb{R}_{+}$ and $\Gamma=(\gamma_1,\cdots,\gamma_k)$,
    define $F_{\Gamma}:\mathcal{M}_{g,m,n}(\overrightarrow L,\overrightarrow\theta)\longrightarrow \mathbb{R}_{+}$ by
\begin{equation}\notag
     F_{\Gamma}(Y)=F(\ell_{\Gamma}(Y)).
\end{equation}
         
    Then the integral of $F_{\Gamma}$ over $\mathcal{M}_{g,m,n}(\overrightarrow L, \overrightarrow \theta)$ is given by
\begin{equation}\notag
     \int_{\mathcal{M}_{g,m,n}(\overrightarrow L, \overrightarrow \theta)}F_{\Gamma}(Y)dY=2^{-M(\Gamma)}\int_{\overrightarrow x \in \mathbb{R}_{+}^{k}}F(\overrightarrow x)V_{g,m,n}(\Gamma,\ \overrightarrow{x},\ \overrightarrow \beta,\ \overrightarrow L)\overrightarrow xd\overrightarrow x,
\end{equation}
   
where $\overrightarrow x\cdot d\overrightarrow x=x_1\dots x_k\cdot dx_1\wedge\dots\wedge dx_k$ and
$\overrightarrow x=(x_1,\cdots,x_k),$ and 
\begin{equation}\notag
    M(\Gamma):=|\{i|\gamma_i \text{ separates off a one-handle from } S_{g,n}\}|.
\end{equation}
\end{lemma}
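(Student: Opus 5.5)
The plan is to follow Mirzakhani's proof of \cite[Theorem 7.1]{MR2264808}, in its covering-space form. The one new input needed for cone surfaces is that all cone angles lie in $(0,\pi]$. This guarantees a pants decomposition containing $\gamma_1,\dots,\gamma_k$ \cite{Zhang2004}, and Fenchel--Nielsen coordinates in which $\omega_{wp}=\sum d\ell_{\alpha_i}\wedge d\tau_{\alpha_i}$ (Theorem \ref{thm 2.6}). Here, as in Mirzakhani's setting, $F_\Gamma$ is understood as a function on the cover $\mathcal{M}_{g,m,n}(\overrightarrow L,\overrightarrow\theta)^{\Gamma}$, given by $(X,\eta)\mapsto F(\ell_\eta(X))$. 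The integral is therefore taken over $\mathcal{M}_{g,m,n}(\overrightarrow L,\overrightarrow\theta)^{\Gamma}=\mathcal{T}_{g,m,n}(\overrightarrow L,\overrightarrow\theta)/G_\Gamma$. This is why no $|Sym(\gamma)|$ factor appears; that factor only enters later, in Lemma \ref{lem 3.2}, when one pushes forward along $\pi^\Gamma$.

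\textbf{Step 1 (coarea decomposition).} Complete $\Gamma$ to a pants decomposition $\{\gamma_1,\dots,\gamma_k,\alpha_{k+1},\dots\}$ of $S_{g,m,n}$, with cone points occupying cusp-like slots of pants. In these coordinates $\omega_{wp}=\sum_{i\le k}dx_i\wedge d\tau_i+\omega'$, where $\omega'$ involves only the remaining coordinates. The level set $\ell_\Gamma^{-1}(\overrightarrow x)$ in $\mathcal{M}^{\Gamma}$ fibres over $\mathcal{M}^{\Gamma*}[\overrightarrow x]$ of (\ref{3.2}) with torus fibres $T^k$, coming from the twists $\tau_i$. Since $F_\Gamma$ depends only on $\overrightarrow x$, Fubini gives
\begin{equation}\notag
\int_{\mathcal{M}^\Gamma}F_\Gamma=\int_{\mathbb{R}_+^k}F(\overrightarrow x)\,\mathrm{Vol}\big(\ell_\Gamma^{-1}(\overrightarrow x)\big)\,dx_1\cdots dx_k .
\end{equation}

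\textbf{Step 2 (fibre volume).} Twisting along $\gamma_i$ by the full length $x_i$ is a Dehn twist, which lies in $G_\Gamma$. Hence the twist parameter $\tau_i$ ranges over a circle of length $x_i$, contributing a factor $x_i$.

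The exception is when $\gamma_i$ bounds a one-handle, i.e.\ a torus with one boundary component. Then the elliptic involution of that one-holed torus lies in $G_\Gamma$ and fixes every point of the corresponding stratum. The effective twist circle is then halved, giving $x_i/2$. This produces the factor $2^{-M(\Gamma)}$.

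One should check that no further generic stabilizers arise from cone points. I expect none: a component containing cone points is not a one-holed torus, and one with only a cone point and a boundary is excluded by the hyperbolicity condition.

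\textbf{Step 3 (base volume).} By Lemma \ref{lem 3.3}, the base $\mathcal{M}^{\Gamma*}[\overrightarrow x]$ is symplectomorphic to $\prod_i\mathcal{M}_{g_i,m_i,n_i}(\ell_{A_i})$. Its volume is therefore $V_{g,m,n}(\Gamma,\overrightarrow x,\overrightarrow\beta,\overrightarrow L)$ by (\ref{2.2}). Combining Steps 1--3 gives the formula.

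\textbf{Main obstacle.} The delicate step is Step 2 together with the identification of the symplectic reduction. One must justify that, after completing to a pants decomposition with cone points, the $\tau_i$ are genuinely global coordinates on the fibre. In particular, $G_\Gamma$ should act on $\ell_\Gamma^{-1}(\overrightarrow x)$ as (Dehn twists along $\gamma_i$) $\times$ (mapping classes of the cut surface), up to exactly the one-handle involutions. To settle this, I would appeal to the structure of $G_\Gamma$ as an extension of $\prod Mod(S_i)$ by $\mathbb{Z}^k$, which holds verbatim for marked points, and to Theorem \ref{thm 2.6} for the symplectic form.
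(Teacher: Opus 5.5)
Your proposal is correct and follows the same route as the paper. You work on the cover $\mathcal{M}_{g,m,n}(\overrightarrow L,\overrightarrow\theta)^{\Gamma}$, which is what the paper's proof actually integrates over. You decompose along the level sets $\ell_\Gamma^{-1}(\overrightarrow x)$, which fibre over $\mathcal{M}_{g,m,n}(\overrightarrow L,\overrightarrow\theta)^{\Gamma*}[\overrightarrow x]$ with torus fibres of volume $2^{-M(\Gamma)}x_1\cdots x_k$. You then identify the base volume through the symplectomorphism of Lemma~\ref{lem 3.3}.

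Your Step 2 is more explicit than the paper, which simply cites Mirzakhani's p.~214. Strictly, the element of $G_\Gamma$ responsible for the halving is the elliptic involution of the one-handle composed with a half-twist along $\gamma_i$, and it acts by $\tau_i\mapsto\tau_i+x_i/2$.
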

The discussion of the definition of $M(\Gamma)$ is the same as page 214 of \cite{MR2264808}, so we omit it and use the definition in Theorem 7.1 of \cite{MR2264808}.
\begin{proof}
Fix $x\in \mathbb{R}^k_{+}$, we consider the set
$\ell_{\Gamma}^{-1}(\overrightarrow x)$, where the function $F_{\Gamma}=F(\ell_{\Gamma}(X))$ is a constant on this set.
For $x\in \mathbb{R}_{+}$, let:
\begin{equation}\notag
    I(\overrightarrow x)=\int_{\mathcal{M}_{g,m,n}(\overrightarrow L,\overrightarrow \theta)^{\Gamma}[\overrightarrow x]}F(\ell_{\Gamma}(Y))dY.
\end{equation}

By Theorem \ref{thm 2.6},
\begin{equation}\notag
    \int_{\mathcal{M}_{g,m,n}^{\Gamma}(\overrightarrow L,\overrightarrow \theta)}F_{\Gamma}(Y)dY=\int_{\overrightarrow x \in \mathbb{R}_{+}^{k}}I(\overrightarrow x)d\overrightarrow x.
\end{equation}

To calculate $I(\overrightarrow x)$, we need to show:
\begin{equation}\notag
    \operatorname{Vol}(l_{\Gamma}^{-1}(\overrightarrow x))=2^{-M(\Gamma)}x_1\cdots x_kV_{g,m,n}(\Gamma,\ \overrightarrow{x},\ \overrightarrow \beta,\ \overrightarrow L).
\end{equation}

We consider the projection map:
\begin{equation}\notag
    \pi:\mathcal{M}_{g,m,n}(\overrightarrow L,\overrightarrow \theta)[\overrightarrow x]^{\Gamma}\longrightarrow\mathcal{M}_{g,m,n}(\overrightarrow L,\overrightarrow \theta)^{\Gamma*}[\overrightarrow x].
\end{equation}
Combining with the discussion of $M(\Gamma)$ in page 214 in \cite{MR2264808}, for an open set $U\subset\mathcal{M}_{g,m,n}(\overrightarrow L,\overrightarrow \theta)^{\Gamma*}[\overrightarrow x]$, we have:
\begin{equation}\notag
    \operatorname{Vol}(\pi^{-1}(U))=2^{-M(\Gamma)}\operatorname{Vol}(U)\cdot x_1\cdots x_k.
\end{equation}

Thus by Lemma \ref{lem 3.3}:
\begin{equation}
    \notag
I(\overrightarrow x)=2^{-M(\Gamma)}\cdot F(\overrightarrow x)x_1\cdots x_k\cdot V_{g,m,n}(\Gamma,\ \overrightarrow{x},\ \overrightarrow \beta,\ \overrightarrow L).
\end{equation}
\end{proof}
Regard the $\mathcal{M}_{g,m,n}(\overrightarrow L,\overrightarrow \theta)^{\Gamma}$ as a covering space of $\mathcal{M}_{g,m,n}(\overrightarrow L,\overrightarrow \theta)$ with covering number $|Sym(\gamma)|$, we get the Lemma \ref{lem 3.2}.
\subsection{The volume of $\mathcal{M}_{1,0,1}(\theta)$}
In this part, we consider the volume of the moduli space of a compact hyperbolic surface with one cone point and one genus without boundaries. We denote this surface by $S_{1,0,1}(\theta)$, and denote its moduli space by $\mathcal{M}_{1,0,1}(\theta)$. The volume of the moduli space $\mathcal{M}_{1,0,1}(\theta)$ is denoted by $V_{1,0,1}(\theta)$.

The main idea for integrating over $\mathcal{M}_{1,0,1}(\theta)$ is the spliting along a simple closed curve $\gamma$ on the surface. In this case, we can divide the surface into simpler surfaces. And the moduli space of the original surface can be expressed as a product of the moduli spaces of the simpler surfaces. We cut the surface $S_{1,0,1}(\theta)$ along a simple closed curve $\gamma$. Then we obtain a pair of pants with one cone point and two geodesic boundaries. We define $\mathcal{M}(S_{0,2,1},\ell_{\gamma}=t,\theta)$ to be the moduli space of Riemann surfaces homeomorphic to $S_{0,2,1}(\gamma,\theta)$ such that the lengths of the two boundary components corresponding to $\gamma$ are equal to $t$.

By the discussion of Section \ref{subsec 3.1}, we establish the following integration lemma on $\mathcal{M}_{1,0,1}(\theta)$:
\begin{lemma}
    For any function $F:\mathbb{R}\longrightarrow \mathbb{R}_{+}$ and a closed geodesic $\gamma$,
    define $F_{\gamma}:\mathcal{M}_{1,0,1}(\theta)\longrightarrow \mathbb{R}$ by
\begin{equation}\notag
     F_{\gamma}(Y)=F(\ell_{\gamma}(Y)).
\end{equation}
         
    Then the integral of $F_{\gamma}$ over $\mathcal{M}_{1,0,1}(\theta)$ is given by
\begin{equation}\notag
    \int_{\mathcal{M}_{1,0,1}(\theta)}F_{\gamma}(Y)dY=\int_{0}^{\infty}F(x)\operatorname{Vol}(\mathcal{M}(S_{0,2,1}(\gamma),\theta,\ell_\gamma=x)xdx.
\end{equation}
    In particular, $\operatorname{Vol}(\mathcal{M}(S_{0,2,1}(\gamma),\theta,\ell_\gamma=x)=1.$

\end{lemma}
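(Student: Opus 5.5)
The plan is to specialize the integration formula over the covering space $\mathcal{M}_{g,m,n}(\overrightarrow L,\overrightarrow \theta)^{\Gamma}$ to the case $g=1$, $m=0$, $n=1$ and $k=1$, with $\Gamma=(\gamma)$ and $\gamma$ a non-separating simple closed curve. On $S_{1,0,1}$ every essential, non-peripheral simple closed curve is non-separating, so the only question is what the combinatorial factors $2^{-M(\Gamma)}$ and $|Sym(\gamma)|$ become. Since $F_\gamma(Y)=F(\ell_\gamma(Y))$ depends on the choice of $\gamma$ in its mapping class group orbit, I read the left-hand side as an integral over the cover
\[
\mathcal{M}_{1,0,1}(\theta)^{\gamma}=\mathcal{T}_{1,0,1}(\theta)/\operatorname{Stab}(\gamma).
\]

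\textbf{Step 1: coordinates and the fibration.} By Theorem \ref{thm 2.6}, since $\theta\in(0,\pi]$, the surface has a pants decomposition consisting of the single curve $\gamma$. This gives Fenchel--Nielsen coordinates $(\ell,\tau)\in\mathbb{R}_+\times\mathbb{R}$ on $\mathcal{T}_{1,0,1}(\theta)$, in which $\omega_{wp}=d\ell\wedge d\tau$. The group $\operatorname{Stab}(\gamma)$ is generated, modulo elements acting trivially on Teichm\"uller space, by the Dehn twist $\phi_\gamma$, which acts by $\tau\mapsto\tau+\ell$. Hence a fundamental domain for the cover is $\{(\ell,\tau):\ell>0,\ 0\le\tau<\ell\}$, and the level set $\ell_\gamma^{-1}(x)$ is a circle of twist-length $x$.

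\textbf{Step 2: integrate.} Integrating $F(\ell)\,d\ell\wedge d\tau$ over this domain gives
\[
\int_0^\infty F(x)\,x\,dx .
\]
This matches the general lemma with $k=1$, provided the fibre factor $2^{-M(\Gamma)}\,x\,V(\Gamma,x)$ reduces to $x\cdot\operatorname{Vol}(\mathcal{M}(S_{0,2,1}(\gamma),\theta,\ell_\gamma=x))$.

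\textbf{Step 3: the cut surface.} Cutting along $\gamma$ yields a generalized pair of pants with two geodesic boundaries of length $x$ and one cone point of angle $\theta$. A generalized pair of pants is determined up to isometry by its three boundary data (lengths and cone angle), by the same right-angled hexagon argument used for ordinary pants, extended to cone points with angle at most $\pi$ as in \cite{Zhang2004}. So $\mathcal{M}(S_{0,2,1}(\gamma),\theta,\ell_\gamma=x)$ is a single point. Its volume, with respect to the zero-dimensional volume form, is $1$ by convention. Lemma \ref{lem 3.3} identifies the quotient $\ell_\gamma^{-1}(x)/S^1$ with this point, consistently with Step 2.

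\textbf{Main obstacle: the factors of $2$.} The step I expect to be delicate is accounting for these factors. The torus with one cone point carries the elliptic involution, which fixes the cone point and preserves every simple closed curve. It lies in $\operatorname{Stab}(\gamma)$ but acts trivially on $\mathcal{T}_{1,0,1}(\theta)$. In addition, a half-twist issue is what produces $2^{-M(\Gamma)}$ in Mirzakhani's setting.

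I would argue that for $\Gamma=(\gamma)$ on $S_{1,0,1}$ the curve $\gamma$ does not separate off a one-handle, so $M(\Gamma)=0$, and that $Sym(\gamma)$ is trivial for a single curve. I would then fix the convention, as in \cite{MR2264808}, that the volume is computed on the orbifold without dividing by the generic involution. With these conventions no extra factor of $2$ appears and the stated formula follows. If the paper later needs the other normalization for $V_{1,0,1}(\theta)$, the factor $\tfrac12$ would be inserted at the level of the McShane-type identity (Theorem \ref{thm 2.4}) rather than in this lemma.
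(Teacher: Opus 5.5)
Your proposal is correct and follows essentially the paper's route. The lemma is the $k=1$ specialization of the Section 3.1 integration formula, with $M(\Gamma)=0$ and $Sym(\gamma)$ trivial; the level set $\ell_\gamma^{-1}(x)$ in the cover is a twist circle of length $x$, and the cut generalized pants contribute volume $1$ because their moduli space is a point, all under Mirzakhani's convention of not dividing by the elliptic involution.

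One thing to correct is your closing aside. McShane's identity is pointwise and has no normalization freedom, so the orbifold normalization behind the paper's later value $V_{1,0,1}(\theta)=-\frac{\theta^2}{48}+\frac{\pi^2}{12}$ needs the extra $\tfrac12$ inside this lemma, i.e.\ $\int F_\gamma=\tfrac12\int_0^\infty F(x)\,x\,dx$, not in the identity. The lemma as stated, combined with the identity, gives $-\frac{\theta^2}{24}+\frac{\pi^2}{6}$ instead.
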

Thus we can calculate $V_{1,0,1}(\theta).$
\begin{theorem}
\begin{equation} \notag
     V_{1,0,1}(\theta)=-\frac{\theta^2}{48}+\frac{\pi^2}{12}.
\end{equation}
    \end{theorem}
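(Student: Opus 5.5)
The plan is to integrate the generalized McShane identity of Theorem \ref{thm 2.4} over $\mathcal{M}_{1,0,1}(\theta)$ and then unfold the resulting sum with the integration lemma for $\mathcal{M}_{1,0,1}(\theta)$ stated just above.

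\textbf{Step 1: specialize the identity to the torus.} On $S_{1,0,1}(\theta)$, an unordered pair $\{\alpha,\beta\}$ that bounds an embedded pair of pants together with the cone point must satisfy $\alpha=\beta=\gamma$ for a non-separating simple closed geodesic $\gamma$. Indeed, cutting along $\gamma$ yields exactly $S_{0,2,1}$, whose two boundary components are both copies of $\gamma$. So Theorem \ref{thm 2.4} reads
\begin{equation}\notag
\sum_{\gamma} D_\theta(\ell_\gamma(X))=\frac{\theta}{2},\qquad D_\theta(x):=2\tan^{-1}\left(\frac{\sin\frac{\theta}{2}}{\cos\frac{\theta}{2}+e^{x}}\right),
\end{equation}
where the sum runs over all simple closed geodesics on $X$.

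\textbf{Step 2: integrate and unfold.} All such $\gamma$ lie in a single mapping class group orbit, so the left side is $f_\gamma$ with $f=D_\theta$. Integrating over $\mathcal{M}_{1,0,1}(\theta)$ gives $\frac{\theta}{2}V_{1,0,1}(\theta)$ on the right. On the left, I apply the unfolding of Lemma \ref{lem 3.2} / the integration lemma on $\mathcal{M}_{1,0,1}(\theta)$, using $\operatorname{Vol}(\mathcal{M}(S_{0,2,1}(\gamma),\theta,\ell_\gamma=x))=1$. This yields $\frac{\theta}{2}V_{1,0,1}(\theta)=c\int_0^\infty x\,D_\theta(x)\,dx$.

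Here $c$ is the combinatorial factor, and pinning it down is the main obstacle. As in Mirzakhani's treatment of $\mathcal{M}_{1,1}$, every surface in $\mathcal{M}_{1,0,1}(\theta)$ carries the elliptic involution fixing each simple closed curve. Together with the twist convention, this forces $c=\tfrac12$. I would justify this by running the covering argument of Section \ref{subsec 3.1} with the $\mathbb{Z}/2$ stabilizer taken into account, exactly as on page 214 of \cite{MR2264808}. As a consistency check, the resulting formula should specialize to $V_{1,1}(L)=(L^2+4\pi^2)/48$ under $L=i\theta$.

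\textbf{Step 3: compute the integral.} Put $a=\theta/2$. Since $\tan^{-1}\big(\sin a/(\cos a+e^x)\big)=\arg(e^x+e^{ia})=\operatorname{Im}\log(1+e^{ia-x})$, expanding in powers of $e^{-x}$ gives
\begin{equation}\notag
D_\theta(x)=2\sum_{k\ge1}(-1)^{k+1}\frac{\sin(ka)}{k}e^{-kx}.
\end{equation}
Integrating term by term, which is justified by absolute convergence for $x>0$, and using $\int_0^\infty xe^{-kx}\,dx=k^{-2}$, we get
\begin{equation}\notag
\int_0^\infty x\,D_\theta(x)\,dx=2\sum_{k\ge1}(-1)^{k+1}\frac{\sin(ka)}{k^3}.
\end{equation}
The classical Fourier series $\sum_{k\ge1}(-1)^{k+1}\sin(ka)/k^3=(\pi^2a-a^3)/12$, valid for $|a|\le\pi$ and in particular for $\theta\in(0,\pi]$, then gives the value $\frac{\pi^2\theta}{12}-\frac{\theta^3}{48}$.

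\textbf{Step 4: conclude.} With $c=\tfrac12$ we obtain
\begin{equation}\notag
\frac{\theta}{2}V_{1,0,1}(\theta)=\frac{\pi^2\theta}{24}-\frac{\theta^3}{96},
\end{equation}
that is, $V_{1,0,1}(\theta)=\frac{\pi^2}{12}-\frac{\theta^2}{48}$. The analytic steps are routine, so the only delicate point is the factor $\tfrac12$ in Step 2.
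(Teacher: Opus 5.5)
Your proof is correct. The geometric half is the same as the paper's: integrate the Tan--Wong--Zhang identity over $\mathcal{M}_{1,0,1}(\theta)$, unfold using $\operatorname{Vol}(\mathcal{M}(S_{0,2,1}(\gamma),\theta,\ell_\gamma=x))=1$, and insert the factor $c=\tfrac12$. On that factor, the paper's stated integration lemma for $\mathcal{M}_{1,0,1}(\theta)$ has no $\tfrac12$. Yet its proof writes $\theta V_{1,0,1}(\theta)=\int_0^\infty x\,\mathcal{D}(\theta,x,x)\,dx$ for an identity summing to $\theta/2$, so it silently uses $c=\tfrac12$. Your appeal to the elliptic involution, checked against $V_{1,1}(L)=(L^2+4\pi^2)/48$ at $L=i\theta$, is the honest account of where that factor comes from.

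You genuinely diverge in evaluating the one-dimensional integral. The paper does the following:
\begin{itemize}
\item It first differentiates in $\theta$, producing the kernel $\tfrac12\bigl(\frac{1}{1+e^{x-i\theta/2}}+\frac{1}{1+e^{x+i\theta/2}}\bigr)$.
\item It shifts the contour to $\operatorname{Im} y=\pm\theta/2$; no poles of $1/(1+e^{y})$ are crossed since $\theta/2<\pi$.
\item It uses $\int_0^\infty \frac{y}{1+e^y}\,dy=\frac{\pi^2}{12}$ (from $\operatorname{Li}_2(-1)$) and $\frac{1}{1+e^{ix}}+\frac{1}{1+e^{-ix}}=1$.
\item It integrates back in $\theta$, implicitly taking zero as the constant of integration.
\end{itemize}
You instead expand $D_\theta$ in powers of $e^{-x}$ and sum $\sum_{k\ge1}(-1)^{k+1}\sin(ka)/k^3=(\pi^2a-a^3)/12$. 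This yields $\frac{\theta}{2}V_{1,0,1}(\theta)$ directly, with no contour deformation and no constant to fix, so it is shorter and fully elementary.

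The paper's detour through $\partial_\theta\bigl(\tfrac12\theta V\bigr)$ has its own merit. It produces exactly the differentiated kernel that drives the recursion in Section~\ref{sec4}, so the $(1,0,1)$ computation doubles as a template for that formula.
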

    \begin{proof}
We elaborate the main idea of calculation of $V_{g,m,n}$ through an example of g=n=1.

We need to use the generalized McShane's identity in Theorem \ref{thm 2.4}. That is
\begin{equation}\notag
    \sum 2\tan^{-1}(\frac{\sin{\frac{\theta}{2}}}{\cos{\frac{\theta}{2}}+exp^\frac{|\alpha|+|\beta|}{2}})=\frac{\theta}{2}.
\end{equation}
 As we know
\begin{equation}\notag
    \cos{\theta}=\frac{e^{i\theta}+e^{-i\theta}}{2},\ \sin{\theta}=\frac{e^{i\theta}-e^{-i\theta}}{2i},\ \tan{\theta}=\frac{e^{i\theta}-e^{-i\theta}}{ie^{i\theta}+ie^{-i\theta}},\ \tan^{-1}{z}=\frac{\ln{\frac{i-z}{i+z}}}{2i}.
\end{equation}
We have:
\begin{equation}\notag
     \tan^{-1}(\frac{\sin{\frac{\theta}{2}}}{\cos{\frac{\theta}{2}}+exp(\frac{y+z}{2})})=
 \frac{1}{2i}\ln\frac{i(\exp({\frac{y+z}{2}})+{\frac{e^{\frac{i\theta}{2}}+e^\frac{-i\theta}{2}}{2}})-{\frac{e^{\frac{i\theta}{2}}-e^\frac{-i\theta}{2}}{2i}}}{i(\exp({\frac{y+z}{2}})+{\frac{e^{\frac{i\theta}{2}}+e^\frac{-i\theta}{2}}{2}})+\frac{e^\frac{-i\theta}{2}-e^\frac{-i\theta}{2}}{2i}}.
\end{equation}

 This implies that:
 \begin{equation}\notag
      \sum\frac{1}{i}\ln\frac{e^{\frac{i\theta}{2}}+e^\frac{y+z}{2}}{e^\frac{-i\theta}{2}+e^\frac{y+z}{2}}=\frac{\theta}{2},
 \end{equation}
 where $y,\ x$ is the length of the simple closed geodesics on M which bound with the cone point an embedded pair of pants. Denote the left side of the above identity by $\mathcal{D}(\theta,y,x)$. Then we can calculate the partial derivatives of $\mathcal{D}(\theta,y,x).$

In this case, $y=z$. Then we have:
\begin{equation}\notag
    2\frac{\partial \mathcal{D}(\theta,x,x)}{\partial {\theta}}=\frac{1}{1+e^{x-\frac{i\theta}{2}}}+\frac{1}{1+e^{x+\frac{i\theta}{2}}}.
\end{equation}

Integrating over the above generalized McShane's identity over $\mathcal{M}_{1,0,1}(\theta)$, we obtain:
\begin{equation}
    \notag
\theta \cdot V_{1,0,1}(\theta)=\int_{0}^{\infty} x \mathcal{D}(\theta,x,x) dx.
\end{equation}

So we have:
\begin{equation}\notag
    \frac{\partial (\theta \cdot V_{1,0,1}(\theta))}{\partial \theta}=\frac{1}{2}\int_{0}^{\infty} x (\frac{1}{1+e^{x-\frac{i\theta}{2}}}+\frac{1}{1+e^{x+\frac{i\theta}{2}}}) dx.
\end{equation}

In order to calculate this integration, we need to integrate it in the complex plane $\mathbb{C}$. By setting $y_1=x+\frac{i\theta}{2},\ y_2=x-\frac{i\theta}{2}$, we have:
\begin{equation}\notag
    \int_{0}^{\infty} x (\frac{1}{1+e^{x-\frac{i\theta}{2}}}+\frac{1}{1+e^{x+\frac{i\theta}{2}}}) dx=\int_{0+\frac{i\theta}{2}}^{\infty+\frac{i\theta}{2}} (y_1-\frac{i\theta}{2})\frac{1}{1+e^{y_1}}+\int_{0-\frac{i\theta}{2}}^{\infty-\frac{i\theta}{2}} (y_2+\frac{i\theta}{2})\frac{1}{1+e^{y_2}}.
\end{equation}

As $\theta<\pi$, by residue theorem, we have:
\begin{align}
\label{A1}\notag
         &\int_{0+\frac{i\theta}{2}}^{\infty+\frac{i\theta}{2}} (y_1-\frac{i\theta}{2})\frac{1}{1+e^{y_1}}d{y_1}\\
=&\int_{0}^{\infty} (y_1-\frac{i\theta}{2})\frac{1}{1+e^{y_1}}dy_{1}
+\int_{\frac{i\theta}{2}}^{0}(y_1-\frac{i\theta}{2})\frac{1}{1+e^{y_1}}dy_{1}\\
\notag
+&\int_{\infty}^{\infty+\frac{i\theta}{2}} (y_1-\frac{i\theta}{2})\frac{1}{1+e^{y_1}}dy_{1}.    
\end{align}
    \begin{align}
    \label{A2}\notag
         &\int_{0-\frac{i\theta}{2}}^{\infty-\frac{i\theta}{2}} (y_2+\frac{i\theta}{2})\frac{1}{1+e^{y_2}}dy_{2}\\
     =&\int_{0}^{\infty} (y_2+\frac{i\theta}{2})\frac{1}{1+e^{y_2}}dy_{2}
    +\int_{-\frac{i\theta}{2}}^{0} (y_2+\frac{i\theta}{2})\frac{1}{1+e^{y_2}}dy_{2}\\
    \notag+&\int_{\infty}^{\infty-\frac{i\theta}{2}} (y_2+\frac{i\theta}{2})\frac{1}{1+e^{y_2}}dy_{2}.
    \end{align}

In the two formulas, the last terms can be controlled by $\epsilon$. So they will tends to 0 as $y\longrightarrow \infty$. Then we only need to calculate the front two terms.
Add the equation $(\ref{A1})$ to the equation $(\ref{A2})$, we have:
\begin{align}
\notag
    &\int_{0}^{\infty} x (\frac{1}{1+e^{x-\frac{i\theta}{2}}}+\frac{1}{1+e^{x+\frac{i\theta}{2}}})dx\\
\notag
=&\int_{\frac{i\theta}{2}}^{0} (y_1-\frac{i\theta}{2})\frac{1}{1+e^{y_1}}dy_{1}+\int_{-\frac{i\theta}{2}}^{0} (y_2+\frac{i\theta}{2})\frac{1}{1+e^{y_2}}dy_{2}\\
\notag
+&2\int_{0}^{\infty} y_2\frac{1}{1+e^{y_2}}dy_{2}.
\end{align}

By the special case $Li_2(-1)$ in page 6 of \cite{MR2290758}, we know that:

$$2\int_{0}^{\infty} y_2\frac{1}{1+e^{y_2}}dy_{2}=\frac{\pi^2}{6}.$$

To calculate the first two terms, we set $y_1=ix_1,\ y_2=ix_2$. Then we get:
\begin{align}
\notag
    &\int_{0}^{\infty} x (\frac{1}{1+e^{x-\frac{i\theta}{2}}}+\frac{1}{1+e^{x+\frac{i\theta}{2}}})dx\\
\notag
=&\frac{\pi^2}{6}-\int_{-\frac{\theta}{2}}^{0} (x_2+\frac{\theta}{2})\frac{1}{1+e^{ix_2}}dx_{2}-\int_{\frac{\theta}{2}}^{0} (x_1-\frac{\theta}{2})\frac{1}{1+e^{ix_1}}dx_{1}\\
\notag
=&\frac{\pi^2}{6}-\int_{\frac{\theta}{2}}^{0}(x-\frac{\theta}{2})(\frac{1}{1+e^{ix}}+\frac{1}{1+e^{-ix}})dx.
\end{align}

Since we have
\begin{equation}\notag
    \frac{1}{1+e^{ix}}+\frac{1}{1+e^{-ix}}=1.
\end{equation}

Therefore, we have:
\begin{equation}\notag
    V_{1,1}(\theta)=\frac{1}{2}(-\frac{\theta^2}{24}+\frac{\pi^2}{6})=-\frac{\theta^2}{48}+\frac{\pi^2}{12},
\end{equation}

which is the evaluation of $i\theta$ for Mirzakhani's polynomial of $g=1,\ n=1$ in page 207 of \cite{MR2264808}.
\end{proof}

\section{The recursion of the volume polynomials}\label{sec4}
The discussion is established on a compact hyperbolic surface $M$ with all cone points in $(0,\pi]$. According to the definitions in pages 83-85 of \cite{MR2215456}, fixing $\gamma$ as a cone point of angle $\theta$ or a geodesic boundary component of length $L$, we can define the Gap functions of different situations by:

Case 1. Both $\alpha$ and $\beta$ are a simple closed geodesic in the geometric interior of $M$ which bound a pair of pants with $\gamma$. In this case:
\begin{equation}\notag
\operatorname{Gap}_1(\gamma;\alpha,\beta))=2\tanh^{-1}\left(\frac{\sinh\frac{|\gamma|}{2}}{\cosh\frac{|\gamma|}{2}+\exp\frac{|\alpha|+|\beta|}{2}}\right).
\end{equation}

Case 2. $\alpha$ is a boundary geodesic and $\beta$ is a simple closed geodesic in the geometric interior of $M$. In this case:
\begin{equation}
\notag
\operatorname{Gap}_2(\gamma;\alpha,\beta))=|\frac{|\gamma|}{2}|-\tanh^{-1}\left(\frac{\sinh\frac{|\gamma|}{2}\sinh{\frac{|\beta|}{2}}}{\cosh\frac{|\alpha|}{2}+\cosh{\frac{|\gamma|}{2}}\cosh\frac{|\beta|}{2}}\right).
\end{equation}

Case 3. $\alpha$ is a cone point of angle $\phi$ and $\beta$ is  a simple closed geodesic in the geometric interior of $M$. In this case:
\begin{equation}\notag
\operatorname{Gap}_3(\gamma;\alpha,\beta))=|\frac{|\gamma|}{2}|-\tan^{-1}\left(\frac{\sin\frac{|\gamma|}{2}\sinh{\frac{|\beta|}{2}}}{\cos\frac{\phi}{2}+\cos{\frac{|\gamma|}{2}}\cosh\frac{|\beta|}{2}}\right).
\end{equation}

Case 4. $\alpha$ is a cusp and $\beta$ is  a simple closed geodesic in the geometric interior of M. In this case:
\begin{equation}\notag
    \operatorname{Gap}_4(\theta;\alpha,\beta))=|\frac{|\gamma|}{2}|-\tan^{-1}\left(\frac{\sin\frac{\gamma}{2}\sinh{\frac{|\beta|}{2}}}{1+\cos{\frac{\gamma}{2}}\cosh\frac{|\beta|}{2}}\right).
\end{equation}

If $\gamma$ is a geodesic boundary component of length $L$, then $|\gamma|=L$; if $\gamma$ is a cone point with angle $\theta$, then $|\gamma|=i\theta$. 
According to Theorem 1.8 in \cite{MR2215456}, we have:
\begin{theorem}
For a compact hyperbolic surface $S_{g,m,n}$ and fix a cone point $\theta$, we have that:
\begin{equation}\notag
    \sum_{\alpha,\beta}\operatorname{Gap}(\theta;\alpha,\beta)=\frac{\theta}{2},
\end{equation}
where $\alpha$, $\beta$ satisfies that:

1. $\alpha$, $\beta$ are simple geodesics or cusps or cone points.

2. $\alpha$, $\beta$ bound with $\theta$ an embedded pair of pants on $M$.

\end{theorem}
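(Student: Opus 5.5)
This identity is quoted from Theorem 1.8 of \cite{MR2215456}. If I were to prove it directly rather than cite it, I would follow the Tan--Wong--Zhang/McShane strategy. The idea is to decompose the set of geodesic directions emanating from the distinguished cone point $\theta$ into a set of full measure plus countably many gaps, one for each embedded pair of pants bounded by $\theta$ and $\{\alpha,\beta\}$.

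\textbf{Step 1: the parameter space.} Parametrize the unit tangent directions at the cone point by a circle of total angle $\theta$. For each direction, follow the geodesic ray and stop when it first self-intersects or hits $\partial M$ or another cone point. Call the ray \emph{simple} if it never does either. Using the Birman--Series type argument adapted to cone surfaces with angles in $(0,\pi]$, I would show that the set of directions giving complete simple rays (simple and infinite) has measure zero. This adaptation is where the hypothesis $\theta\le\pi$ is needed, since it guarantees that geodesics do not pass through cone points in a degenerate way and that pants decompositions exist.

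\textbf{Step 2: classifying the complement.} Each remaining direction yields a ray that first self-intersects, or ends at a boundary component or another cone point. A regular neighbourhood of the resulting loop-with-tail, together with the cone point, determines a unique embedded pair of pants $P$ with boundary $\theta,\alpha,\beta$. Here $\alpha,\beta$ are geodesics, boundary components, cone points or cusps, exactly as in conditions 1 and 2 of the statement. Conversely, each such $P$ contributes an open set of directions: two symmetric intervals, bounded by the spiraling simple rays in $P$ that do not exit. The complement is thus a disjoint union over the pairs $\{\alpha,\beta\}$, and summing the angular measures gives $\theta$. Dividing by 2 accounts for the two mirror intervals and normalizes each summand to $\operatorname{Gap}(\theta;\alpha,\beta)$, giving total $\theta/2$.

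\textbf{Step 3: computing the gaps.} Inside the pants $P$, compute the angle of the gap by hyperbolic trigonometry in the right-angled hexagon, or in the degenerate pentagon/quadrilateral when a boundary of $P$ is a cone point or cusp. This reproduces $\operatorname{Gap}_1,\dots,\operatorname{Gap}_4$. The key tool is the formal substitution $|\gamma|=i\theta$ in the trigonometric formulas, which converts $\sinh,\cosh$ into $i\sin,\cos$. The cone-point and cusp cases for $\alpha$ follow in the same way with $|\alpha|=i\phi$ or $0$.

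\textbf{Main obstacle.} The hardest step is the measure-zero statement in Step 1 for cone surfaces. I would try first to reduce it to the Birman--Series theorem by passing to a finite orbifold/branched cover when the angles are rational, and then handle general angles by a continuity argument in $\theta$.
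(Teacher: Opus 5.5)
Citing Theorem 1.8 of \cite{MR2215456} is exactly what the paper does; it gives no proof of its own. Your Steps 1--3 are a fair outline of the Tan--Wong--Zhang argument.

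The genuine gap is in your plan for the step you yourself identify as the crux: showing that the directions at the cone point whose rays are simple and complete form a null set. The reduction you propose cannot work.
\begin{itemize}
\item A branched cover only desingularizes a cone point of angle $2\pi/k$. If $\theta=2\pi p/q$ in lowest terms with $p\ge 2$, no local degree turns $\theta$ into $2\pi$. Degree $q$ produces angle $2\pi p>2\pi$, so the cover is again a cone surface, now with a cone angle exceeding $2\pi$, and Birman--Series is not available there.
\item So ``rational'' really means the orbifold angles $2\pi/k$, and this is required at every cone point of $M$, not only the distinguished one. These values form a discrete subset of $(0,\pi]$ accumulating only at $0$. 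No continuity argument in $\theta$ can reach a general angle from them.
\item More fundamentally, being a null set is not preserved under limits of the metric. You cannot obtain it for $M$ from nearby surfaces without a quantitative estimate.
\item The most a soft continuity argument could transport is the identity itself. That would need a dense set of parameters where the identity is already known, together with locally uniform convergence of the gap series. Your reduction supplies neither.
\end{itemize}

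The quantitative estimate you need is exactly a direct adaptation of the Birman--Series counting argument to the cone surface. This is the route taken in \cite{MR2215456}.
\begin{itemize}
\item Code simple geodesic arcs issuing from the cone point combinatorially, e.g.\ relative to a pants decomposition, which exists because all cone angles are at most $\pi$.
\item Show that the number of types of length at most $L$ grows polynomially in $L$.
\item Show that the directions realizing a fixed type form an interval of measure $O(e^{-cL})$, by exponential divergence of geodesics.
\item Let $L\to\infty$.
\end{itemize}

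Relatedly, the hypothesis $\le\pi$ is needed for \emph{all} cone angles, and it is not about geodesics passing through cone points. No locally length-minimizing geodesic passes through a cone point of angle $<2\pi$. Rather, it ensures that essential simple closed curves are realized by simple closed geodesics avoiding the cone points (modulo a degenerate case when two cone angles equal $\pi$). That is what makes the embedded pants indexing the sum exist.

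A smaller correction to Step 2: only in Case 1 is the gap a pair of disjoint mirror intervals. When $\alpha$ is a boundary geodesic, a cusp or a cone point, the gap is the complement of the interval of directions exiting through $\beta$. That complement is cut into two mirror halves by the direction towards $\alpha$. This is why $\operatorname{Gap}_2$, $\operatorname{Gap}_3$ and $\operatorname{Gap}_4$ have the form $|\gamma|/2$ minus a correction term.
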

Because the Gap functions satisfy the definition of $f_{\gamma}$ in Section \ref{subsec 3.1}, so we can integrate the partial of the function $\operatorname{Gap}_{1}$ and $\operatorname{Gap}_{2}$ over the moduli space by Lemma \ref{lem 3.2}. So we can integrate both sides of the McShane's identity over the moduli space of $S_{g,m,n}$.
If we consider a cone point as a geodesic boundary with length  $i\theta$, these formulas are generalizations of Mirzakhani's function $\mathcal{R}$ and $\mathcal{D}$ in Lemma 3.1 \cite{MR2264808}. Because of the existence of the pants decomposition, we can generalized Mirzakhani's recursion formula to compact hyperbolic surface with cone points. We will clarify this extension later.

In the calculation of $V_{1,0,1}(\theta)$, we already used the above idea. The only difference for our discussion and that of Mirzakhani is the Gap functions.

Then we have the following theorem:

    \begin{theorem}
     For a compact hyperbolic surfaces $S_{g,m,n}(L,i\theta)$ with cone points for all angles of cone in $(0,\pi]$, there exists a recursion formula for the             $V_g(\ell_1,\cdots,\ell_m,\\
     i\theta_1,\cdots,i\theta_n)$. That is:
     \begin{align}
          \notag &\frac{\partial({\frac{1}{2}{\theta_1 V_{g,m,n}(\ell_1,\cdots,\ell_m,\theta_1,\theta_2,\cdots,\theta_{n})}})}{\partial{\theta_1}}\\
\notag
=&\int_0^{\infty}\int_0^{\infty}\frac{\partial {\operatorname{Gap}_{1}(\theta_1,x,y)}}{\partial \theta_1}\cdot\frac{1}{2^{\delta(g-1,m+2,n-1)}}\cdot V_{g-1,m+1,n-1}(x,y,\ell_1,\cdots,\ell_{m},\\\notag&\theta_2,\cdots,\theta_n)dxdy\\
\notag
+&\int_0^{\infty}\int_0^{\infty}\frac{\partial {\operatorname{Gap}_{1}(\theta_1,x,y)}}{\partial \theta_1}\sum_{a \in I_{g,n,m}}V(a,x,y,\ell_{1},\cdots,\ell_{m},\theta_2,\cdots,\theta_n)dxdy\\
\notag
+&\int_0^{\infty}\frac{\partial {\operatorname{Gap}_{2}(\theta_1,x,y)}}{\partial \theta_1}\cdot\frac{1}{2^{\delta(g,n-1)}}\sum_{j=1}^{m}V_{g,m,n-1}(x,\ell_{1},\cdots,\ell_{j-1},\ell_{j+1},\cdots,\ell_{m},\\\notag&\theta_2,\cdots\theta_n)dx\\
\notag
+&\int_0^{\infty}\frac{\partial {\operatorname{Gap}_{3}(\theta_1,x,\ell_j)}}{\partial \theta_1}\cdot\frac{1}{2^{\delta(g,m+1,n-2)}}\sum_{j=1}^{m}V_{g,m+1,n-2}(x,\ell_{1},\cdots,\ell_{j-1},\ell_{j+1},\cdots,\ell_{m},\\\notag&\theta_2,\cdots\theta_n)dx.
     \end{align}
     And we have:
\begin{align}
   \notag &\frac{\partial({\frac{1}{2}{\ell_{m+1} V_{g,m+1,n-1}(\ell_1,\cdots\ell_{m+1},\theta_2,\cdots,\theta_{n})}})}{\partial{\ell_{m+1}}}\\
\notag
=&\int_0^{\infty}\int_0^{\infty}\frac{\partial {\operatorname{Gap}_{1}(\ell_1,x,y)}}{\partial \ell_1}\cdot\frac{1}{2^{\delta(g-1,m+2,n-1)}}\cdot V_{g-1,m+2,n-1}(x,y,\ell_1,\cdots,\ell_{m},\\\notag&\theta_2,\cdots,\theta_n)dxdy\\
\notag
+&\int_0^{\infty}\int_0^{\infty}\frac{\partial {\operatorname{Gap}_{1}(\ell_1,x,y)}}{\partial \ell_1}\sum_{a \in I_{g,m+1,n-1}}V(a,x,y,\ell_{1},\cdots,\ell_{m},\theta_2,\cdots,\theta_n)dxdy\\
\notag
+&\int_0^{\infty}\frac{\partial {\operatorname{Gap}_{2}(\ell_1,x,\ell_j)}}{\partial \ell_1}\cdot\frac{1}{2^{\delta(g,m,n-1)}}\sum_{j=2}^{m}V_{g,m,n-1}(x,\ell_{1},\cdots,\ell_{j-1},\ell_{j+1},\cdots,\ell_{m},\\\notag&\theta_2,\cdots\theta_n)dx\\
\notag
+&\int_0^{\infty}\frac{\partial {\operatorname{Gap}_{3}(\ell_1,\theta_1,\theta_j)}}{\partial \ell_1}\cdot\frac{1}{2^{\delta(g,m+1,n-2)}}\sum_{i=1}^{n}V_{g,m+1,n-2}(x,\ell_{1},\cdots,\ell_{m},\\\notag&\theta_2,\cdots,\theta_{i-1},\theta_{i+1}\cdots,\theta_n)dx,
\end{align}

where $\delta(g,m,n)=0$ unless $g=1,m=1,n=0.$ When $g=1,\ m=1,\ n=0,$ we have $\delta(g,m,n)=1.$

\end{theorem}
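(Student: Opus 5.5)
We follow Mirzakhani's scheme, using the generalized McShane identity of Tan--Wang--Zhang (the last theorem before the statement) in place of her identity for bordered surfaces. Fix the cone point of angle $\theta_1$ (respectively the boundary component of length $\ell_{m+1}$ for the second formula). Every $X\in\mathcal M_{g,m,n}(\overrightarrow L,\overrightarrow\theta)$ satisfies
\[
\sum_{\alpha,\beta}\operatorname{Gap}(\theta_1;\alpha,\beta)=\frac{\theta_1}{2},
\]
where the sum runs over embedded pants bounded by $\theta_1$ and $\alpha,\beta$. We group the terms according to the topological type of the pants, that is, by $Mod_{g,m,n}$-orbits of the pair $\{\alpha,\beta\}$. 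There are four types.
\begin{enumerate}[(i)]
\item Both $\alpha,\beta$ are interior and cutting along them leaves a connected surface of type $(g-1,m+2,n-1)$.
\item Both are interior and separate the surface into two pieces, indexed by the set $I_{g,m,n}$ of ways of distributing genus, boundaries and the remaining cone points.
\item $\alpha$ is a boundary $\ell_j$ and $\beta$ is interior.
\item $\alpha$ is another cone point $\theta_j$ and $\beta$ is interior.
\end{enumerate}
Each group has the form $f_\gamma$ as in Lemma \ref{lem 3.2}, with $\gamma=\alpha+\beta$ a multicurve (two components in the first two types, one in the last two). Integrating the identity over the moduli space, the left side gives $\frac{\theta_1}{2}V_{g,m,n}$.

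For each type we apply Lemma \ref{lem 3.2}. This converts the integral of $f_\gamma$ into $\int \operatorname{Gap}_i\,V(\Gamma,\overrightarrow x,\dots)\,\overrightarrow x\,d\overrightarrow x$, and the cut volume factors by (\ref{2.2}) into the products appearing in the statement. The normalizing constants are as follows. $|Sym(\gamma)|=2$ in the first type cancels against the unordered pair and ordering of $x,y$. The factor $2^{-M(\Gamma)}$ is nontrivial only when a cut curve separates off a one-handle with one boundary, which is exactly where the $\delta$-corrections with $(g,m,n)=(1,1,0)$ come from, reflecting the elliptic involution of $S_{1,1,0}$.

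We then differentiate both sides in $\theta_1$. Since $\theta_1$ appears only in the Gap kernels (the lower volumes do not depend on $\theta_1$), we may differentiate under the integral. This gives the first displayed recursion. The second formula is obtained identically with the $\tanh^{-1}$ Gap functions and the boundary length $\ell_{m+1}$. Alternatively, once polynomiality is established, it follows from the first formula by the substitution $\ell=i\theta$, since the Gap functions with $|\gamma|=i\theta$ were written precisely so that they are analytic continuations of Mirzakhani's $\mathcal D,\mathcal R$.

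The main obstacle is justifying the interchange of sum, integral and derivative, and showing that the resulting right-hand side is a polynomial. The derivative $\partial_{\theta_1}\operatorname{Gap}_1$ is
\[
\frac12\left(\frac1{1+e^{(x+y)/2-i\theta_1/2}}+\frac1{1+e^{(x+y)/2+i\theta_1/2}}\right),
\]
and $\partial_{\theta_1}\operatorname{Gap}_{2,3}$ are similar. These decay exponentially in $x,y$, so dominated convergence applies given polynomial bounds on the lower volumes, which hold by induction. For polynomiality we would proceed as in the computation of $V_{1,0,1}(\theta)$: shift the contour from $x\pm i\theta_1/2$ back to the real axis, which is legitimate for $\theta_1<\pi$ because there are no poles of $1/(1+e^{y})$ in the strip $|\operatorname{Im}y|<\pi$. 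Then $\int_0^\infty x^{2k+1}\,\partial_{\theta_1}\operatorname{Gap}\,dx$ is a polynomial in $\theta_1^2$ with $\pi$-power coefficients, namely the evaluation at $i\theta_1$ of Mirzakhani's moment integrals. Induction on $2g-2+m+n$, with base cases $V_{0,3,0}$-type pants volumes equal to $1$ and $V_{1,0,1}(\theta)$ computed above, then yields the recursion, and the case $\theta_1=\pi$ follows by continuity.
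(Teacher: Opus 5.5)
Your proposal is correct and takes essentially the same route as the paper: integrate the Tan--Wong--Zhang identity over the moduli space with the cone version of Mirzakhani's integration formula (Lemma~\ref{lem 3.2}), differentiate in $\theta_1$, and identify the kernels with Mirzakhani's through $\partial_\theta\operatorname{Gap}(\theta,\cdot)=\partial_\ell\operatorname{Gap}(i\theta,\cdot)$. The paper inducts on the number of cone points and matches (\ref{B1}) against (\ref{B2}) term by term, whereas you induct on $2g-2+m+n$ and spell out the contour shift of the kernel moments that the paper leaves implicit; one correction is that $1/|Sym(\gamma)|=\tfrac12$ in the non-separating case (and the $\tfrac12$ from summing over ordered pairs $a\in I_{g,m,n}$) does not cancel, so your argument actually yields a factor $\tfrac12$ in front of the first two terms (as in Mirzakhani's $\mathcal A^{\mathrm{con}}$ and $\mathcal A^{\mathrm{dcon}}$), together with the weights $xy$, resp.\ $x$, in the integrands, which the displayed statement suppresses.
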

\begin{proof}
  We will clarify the generalization of Mirzakhani's recursion formula in Theorem 8.1 of \cite{MR2264808} by induction.
  First, we consider the surface $S_{g,m,1}(\overrightarrow L, \theta)$. If the cone point is replaced by a geodesic boundary with length $\ell_1$ (which means that we consider the surface $S_{g,m+1}(\ell_1,\overrightarrow L)$, we have the following formula, which is expressed in Theorem 8.1 in page 218 of \cite{MR2264808}. The definitions of the three functions in Theorem 8.1 of \cite{MR2264808} are in pages 204-205 of \cite{MR2264808}, we directly use the expression of the three functions here:
\begin{align}
   \notag &\frac{\partial(\frac{1}{2}{\ell_1V_{g}(\ell_1,\cdots,\ell_n)})}{\partial{\ell_1}}\\
\notag=&\int_0^{\infty}\int_0^{\infty}\frac{\partial {\operatorname{Gap}}(\ell_1,x,y)}{\partial \ell_1}\cdot\frac{1}{2^{\delta(g-1,n+1)}}\cdot V_{g-1,n+1}(x,y,\ell_{2},\cdots,\ell_{n})dxdy\\
\notag
+&\int_0^{\infty}\int_0^{\infty}\frac{\partial {\operatorname{Gap}_1}(\ell_1,x,y)}{\partial \ell_1}\cdot\frac{1}{2^{\delta(g,n-1)}}\sum_{a\in I_{g,n}}V(a,x,y,\ell_{2},\cdots,\ell_{n})dxdy\\
\notag
+&\int_0^{\infty}\frac{\partial {\operatorname{Gap}_2}(\ell_1,\ell_j,x)}{\partial \ell_1}\cdot\frac{1}{2^{\delta(g,n-1)}}\sum_{j=2}^{n}V_{g,n-1}(x,\ell_{2},\cdots,\ell_{j-1},\ell_{j+1},\cdots,\ell_{n})dx.
\end{align}

 In terms of one cone point, we have the following formula by splitting the surface. The splitting of the cone surface is clarified in Figure \ref{Figure 1}:

\begin{figure}[!htbp]
\centering 
\includegraphics[width=1.0\textwidth,trim=0 450 0 0]{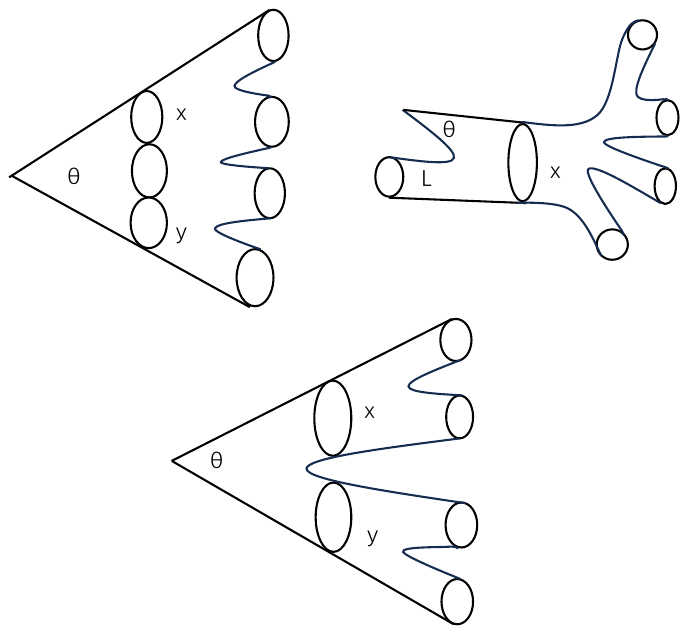} 
\caption{The spliting of n cone points} 
\label{Figure 1}
\end{figure}

\begin{align}\notag
&\frac{\partial(\frac{1}{2}{\theta V_{g,n-1,1}(\overrightarrow L,\overrightarrow \theta)})}{\partial{\theta}}\\
\notag
=&\int_0^{\infty}\int_0^{\infty}\frac{\partial {\operatorname{Gap}_{1}(\theta,x,y)}}{\partial \theta}\cdot\frac{1}{2^{\delta(g-1,n+1)}}\cdot V_{g-1,n+1}(x,y,\ell_{2},\cdots,\ell_{n})dxdy\\
\notag+&\int_0^{\infty}\int_0^{\infty}\frac{\partial {\operatorname{Gap}_{1}(\theta,x,y)}}{\partial \theta}\sum_{a \in I_{g,n}}V(a,x,y,\ell_{2},\cdots,\ell_{n})dxdy\\
\notag+&\int_0^{\infty}\frac{\partial {\operatorname{Gap}_{2}(\theta,\ell_j,x)}}{\partial \theta}\cdot\frac{1}{2^{\delta(g,n-1)}}\sum_{j=2}^{n}V_{g,n-1}(x,\ell_{2},\cdots,\ell_{j-1},\ell_{j+1},\cdots,\ell_{n})dx,
  \end{align}
where the Gap functions are given in the start of this section. And we have:

$$\frac{\partial {\operatorname{Gap}_{1}}}{\partial \theta}(\theta,x,y)=\frac{\partial {\operatorname{Gap}_1}}{\partial \ell_1}(i\theta,x,y),$$

and

$$\frac{\partial {\operatorname{Gap}_{2}}}{\partial \theta}(\theta,x,y)=\frac{\partial {\operatorname{Gap}_2}}{\partial \ell_1}(i\theta,x,y).$$

Comparing with the formula of the geodesic boundary case, we get the formula for the surface with only one cone point, which is an evaluation of the Mirzakhani's polynomial at $\ell_n=i\theta$:

$$V_{g,n-1,1}(\ell_1,\cdots,\ell_{n-1},\theta)=V_{g,n}(\ell_1,\cdots,\ell_n)|_{\ell_n=i\theta},$$
which shows that our conclusion is true for the situation of one cone point.

Now we consider the case that the compact hyperbolic surface with $n$ cone points. We will prove the conclusion by induction.

$$Induction$$

Suppose that the formula is true for $k$ cone points case, $1\le k\le n-1$.

Let $I_{g,m,n}$ be the set of ordered pairs:

$$a=((g_1,I_1,J_1),(g_2,I_2,J_2)),$$

where $I_1,I_2 \subset \{1,\cdots,m\},\ J_1,J_2 \subset \{2,\cdots,n\}$ such that:

1. the two sets $I_1,I_2$ are disjoint and their union is $\{1,\cdots,m\}.$

2. the two sets $J_1,J_2$ are disjoint and their union is $\{2,\cdots,n\}.$

3. the numbers $g_1,g_2\geq0$ and $n_1=|I_1|,n_2=|I_2|,m_1=|J_1|,m_2=|J_2|$ satisfy:
\begin{align}\notag
2\leq 2g_1+n_1+m_1,\\
\notag
2\leq 2g_2+n_2+m_2,
\end{align}

and
\begin{equation}\notag
    g_1+g_2=g.
\end{equation}

Given
\begin{equation}\notag
    a=((g_1,I_1),(g_2,I_2))\in I_{g,n,m}.
\end{equation}
For $I=\{i_1,\cdots,i_k\}$, define $\ell_{I}$ by:
\begin{equation}\notag
    \ell_{I}=(\ell_{i_1},\cdots,\ell_{i_k}).
\end{equation}

For $J=\{j_1,\cdots,j_k\}$, define $\theta_{J}$ by:
\begin{equation}\notag
    \theta_{J}=(\ell_{j_1},\cdots,\ell_{j_k}).
\end{equation}
Let
\begin{equation}
V(a,x,y,\ell_1,\cdots,\ell_m,\theta_2,\cdots,\theta_m)=\frac{V_{g_1,n_1+1,m_1}(x,\ell_{I_1},\theta_{J_1})\times V_{g_2,n_2+1,m_2}(x,\ell_{I_2},\theta_{J_2})}{{2^{\delta(g_1,m_1+1,n_1)}}{2^{\delta(g_2,m_2+1,n_2)}}}.
\end{equation}

We  split the surface as in Figure \ref{Figure 2} .
\begin{figure}[!htbp]
\centering 
\includegraphics[width=1.0\textwidth,trim=0 450 0 0]{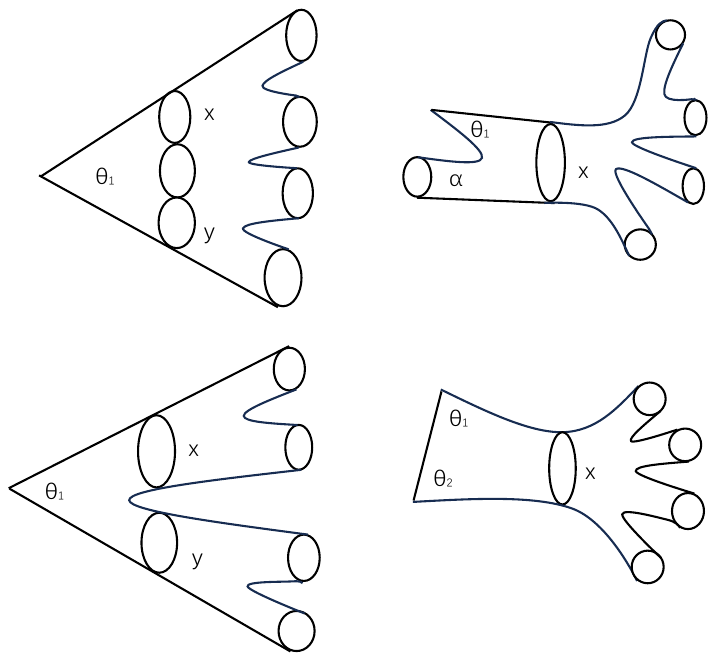} 
\caption{The spliting of n cone points} 
\label{Figure 2}

And we use the McShane's identity to obtain the following equation $(\ref{B1})$:

\end{figure}
\begin{align}
\notag
    &\frac{\partial({\frac{1}{2}{\theta_1 V_{g,m,n}(\ell_1,\cdots,\ell_m,\theta_1,\theta_2,\cdots,\theta_{n})}})}{\partial{\theta_1}}\\
\notag
=&\int_0^{\infty}\int_0^{\infty}\frac{\partial {\operatorname{Gap}_{1}}}{\partial \theta_1}\cdot\frac{1}{2^{\delta(g-1,m+2,n-1)}}\cdot V_{g-1,m+2,n-1}(x,y,\ell_1,\cdots,\ell_{m},\\\notag&\theta_2,\cdots,\theta_n)dxdy\\
\label{B1}
+&\int_0^{\infty}\int_0^{\infty}\frac{\partial {\operatorname{Gap}_{1}}}{\partial \theta_1}\sum_{a \in I_{g,n,m}}V(a,x,y,\ell_{1},\cdots,\ell_{m},\theta_2,\cdots,\theta_n)dxdy\\
\notag
+&\int_0^{\infty}\frac{\partial {\operatorname{Gap}_{2}}}{\partial \theta_1}\cdot\frac{1}{2^{\delta(g,n-1)}}\sum_{j=1}^{m}V_{g,n,m-1}(x,\ell_{1},\cdots,\ell_{j-1},\ell_{j+1},\cdots,\ell_{n},\theta_2,\cdots\theta_n)dx\\
\notag
+&\int_0^{\infty}\frac{\partial {\operatorname{Gap}_{3}}}{\partial \theta_1}\cdot\frac{1}{2^{\delta(g,n+1,m-2)}}\sum_{j=1}^{m}V_{g,n+1,m-2}(x,\ell_{1},\cdots,\ell_{j-1},\ell_{j+1},\cdots,\ell_{n},\\\notag&\theta_2,\cdots\theta_n)dx.
\end{align}

Substituting the cone point by a geodesic boundary with length $\ell_{m+1}$, which means that we consider the Riemann surface $S_{g,m+1,n-1}(l_1,\cdots,l_{m+1},\theta_2,\cdots,\theta_n)$ we have the following equation $(\ref{B2})$, which was shown in Theorem 8.1 in  \cite{MR2264808}, the splitting of this case is as in Figure \ref{Figure 3}:

\begin{figure}[!htbp]
\centering 
\includegraphics[width=1.0\textwidth,trim=0 450 0 0]{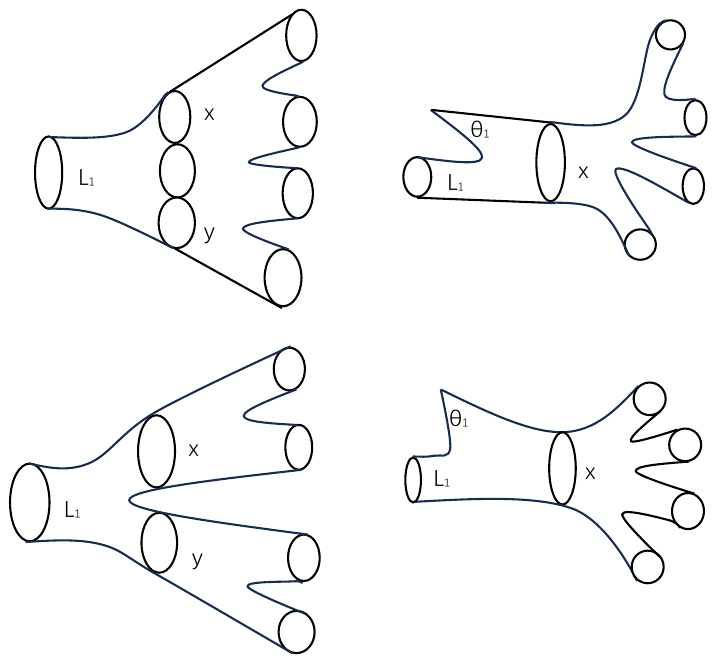} 
\caption{The Spliting of 1 geodesic boundary and n-1 cone points} 
\label{Figure 3}
\end{figure}
\begin{align}\notag
    &\frac{\partial({\frac{1}{2}{\ell_{m+1} V_{g,m+1,n-1}(\ell_1,\cdots\ell_{m+1},\theta_2,\cdots,\theta_{n})}})}{\partial{\ell_{m+1}}}\\
\notag =&\int_0^{\infty}\int_0^{\infty}\frac{\partial {\operatorname{Gap}_{1}}}{\partial \ell_1}\cdot\frac{1}{2^{\delta(g-1,m+2,n-1)}}\cdot V_{g-1,m+2,n-1}(x,y,\ell_1,\cdots,\ell_{m},\\\notag&\theta_2,\cdots,\theta_n)dxdy\\
\label{B2}
+&\int_0^{\infty}\int_0^{\infty}\frac{\partial {\operatorname{Gap}_{1}}}{\partial \ell_1}\sum_{a \in I_{g,m+1,n-1}}V(a,x,y,\ell_{1},\cdots,\ell_{m},\theta_2,\cdots,\theta_n)dxdy\\
\notag
+&\int_0^{\infty}\frac{\partial {\operatorname{Gap}_{2}}}{\partial \ell_1}\cdot\frac{1}{2^{\delta(g,m,n-1)}}\sum_{j=2}^{m}V_{g,m,n-1}(x,\ell_{1},\cdots,\ell_{j-1},\ell_{j+1},\cdots,\ell_{m},\theta_2,\cdots\theta_n)dx\\
\notag
+&\int_0^{\infty}\frac{\partial {\operatorname{Gap}_{3}}}{\partial \ell_1}\cdot\frac{1}{2^{\delta(g,m+1,n2)}}\sum_{i=1}^{n}V_{g,m+1,n-2}(x,\ell_{1},\cdots,\ell_{m},\theta_2,\cdots,\theta_{i-1},\\\notag&\theta_{i+1},\cdots,\theta_n)dx,
\end{align}

where :
\begin{equation}\notag
    \operatorname{Gap}_{3}(L;\alpha,\beta))=\frac{L}{2}-\tanh^{-1}\left(\frac{\sinh\frac{L}{2}\sinh{\frac{|\beta|}{2}}}{\cos\frac{\phi}{2}+\cosh{\frac{L}{2}}\cosh\frac{|\beta|}{2}}\right).
\end{equation}
And we have the following equation $(\ref{B3})$:
\begin{equation}\label{B3}
    \frac{\partial {Gap_{3}}}{\partial \theta}(\theta,x,y)=\frac{\partial {Gap_3}}{\partial \ell_1}(i\theta,x,y).
\end{equation}

 As $\delta$ depends on the number of the handles separated by $\gamma$, we have the following equation $(\ref{B4})$:
\begin{align}
    \delta(g,n-1,m+1)=\delta(g,n,m).
    \label{B4}
\end{align}
 
 By  induction, we have the following equation $(\ref{B5})$:
\begin{align}\notag
V_{g-1,m+2,n-1}(x,y,\ell_2,\cdots,\ell_{m+1},\theta_2,\cdots,\theta_n)\\=V_{g-1,m+n+1}(x,y,\ell_2,\cdots,\ell_{m+1},i\theta_2,\cdots,i\theta_n).\label{B5}
\end{align}
Using $(\ref{B3}),(\ref{B4}),(\ref{B5})$, we obtain the following:
\begin{align}\notag
    \frac{\partial({\frac{1}{2}{\ell_{m+1} V_{g,m+1,n-1}}})}{\partial{\ell_{m+1}}}(\ell_1,\cdots,\ell_{m+1}=i\theta_1,\theta_2,\cdots,\theta_{n})\\\notag
    =\frac{\partial({\frac{1}{2}{\theta_1 V_{g,m,n}}})}{\partial{\theta_1}}(\ell_1,\cdots,\ell_m,\theta_1,\theta_2,\cdots,\theta_{n}).
\end{align}

Then by induction, we know that the surfaces with $n$ cone points have the recursion formula.
\end{proof}
\section*{Funding}
This work was supported by National Natural Science Foundation of China [grant number 12271533, 12371076].
\section*{Acknowledgments} We thank Professor Zhe Sun for raising the question of volumes of moduli spaces of compact hyperbolic surfaces with cone points to us and many helps. We thank Professor Weixu Su for helpful suggestions and helps. 
\clearpage

\newpage

\bibliography{ref}

\begin{thebibliography}{10}

\bibitem{anagnostou2023weilpeterssonvolumesstabilityconditions}
Lukas Anagnostou, Scott Mullane, and Paul Norbury.
\newblock Weil-{P}etersson volumes, stability conditions and wall-crossing.
\newblock {\em arXiv:2310.13281}, 2023.

\bibitem{anagnostou2024volumesmodulispaceshyperbolic}
Lukas Anagnostou and Paul Norbury.
\newblock Volumes of moduli spaces of hyperbolic surfaces with cone points.
\newblock {\em arXiv:2212.13701}, 2024.

\bibitem{MR3497292}
Martin Bridgeman and Ser~Peow Tan.
\newblock Identities on hyperbolic manifolds.
\newblock In {\em Handbook of {T}eichm\"uller theory. {V}ol. {V}}, volume~26 of {\em IRMA Lect. Math. Theor. Phys.}, pages 19--53. Eur. Math. Soc., Z\"urich, 2016.

\bibitem{buser1992geometry}
P.~Buser.
\newblock {\em Geometry and Spectra of Compact Riemann Surfaces}.
\newblock Geometry and Spectra of Compact Riemann Surfaces. Birkh{\"a}user, 1992.

\bibitem{MR2520065}
Norman Do and Paul Norbury.
\newblock Weil-{P}etersson volumes and cone surfaces.
\newblock {\em Geom. Dedicata}, 141:93--107, 2009.

\bibitem{MR762512}
William~M. Goldman.
\newblock The symplectic nature of fundamental groups of surfaces.
\newblock {\em Adv. in Math.}, 54(2):200--225, 1984.

\bibitem{goncharov2024exponentialvolumesmodulispaces}
Alexander~B. Goncharov and Zhe Sun.
\newblock Exponential volumes of moduli spaces of hyperbolic surfaces, 2024.

\bibitem{MR4595283}
Yi~Huang and Zhe Sun.
\newblock Mc{S}hane identities for higher {T}eichm\"uller theory and the {G}oncharov-{S}hen potential.
\newblock {\em Mem. Amer. Math. Soc.}, 286(1422):v+116, 2023.

\bibitem{MR3389436}
Greg McShane.
\newblock {\em A remarkable identity for lengths of curves}.
\newblock PhD thesis, University of Warwick (United Kingdom), 1991.

\bibitem{MR1625712}
Greg McShane.
\newblock Simple geodesics and a series constant over {T}eichmuller space.
\newblock {\em Invent. Math.}, 132(3):607--632, 1998.

\bibitem{MR2264808}
Maryam Mirzakhani.
\newblock Simple geodesics and {W}eil-{P}etersson volumes of moduli spaces of bordered {R}iemann surfaces.
\newblock {\em Invent. Math.}, 167(1):179--222, 2007.

\bibitem{MR1844999}
Toshihiro Nakanishi and Marjatta N\"a\"at\"anen.
\newblock Areas of two-dimensional moduli spaces.
\newblock {\em Proc. Amer. Math. Soc.}, 129(11):3241--3252, 2001.

\bibitem{MR2215456}
Ser~Peow Tan, Yan~Loi Wong, and Ying Zhang.
\newblock Generalizations of {M}c{S}hane's identity to hyperbolic cone-surfaces.
\newblock {\em J. Differential Geom.}, 72(1):73--112, 2006.

\bibitem{Thurston1979TheGA}
William~P. Thurston.
\newblock {\em The geometry and topology of three-manifolds}.
\newblock Princeton lecture notes, 1979.

\bibitem{MR657237}
Scott Wolpert.
\newblock The {F}enchel-{N}ielsen deformation.
\newblock {\em Ann. of Math. (2)}, 115(3):501--528, 1982.

\bibitem{MR2290758}
Don Zagier.
\newblock The dilogarithm function.
\newblock In {\em Frontiers in number theory, physics, and geometry. {II}}, pages 3--65. Springer, Berlin, 2007.

\bibitem{Zhang2004}
Ying Zhang.
\newblock {\em Hyperbolic Cone-Surfaces, Generalized Markoff Maps, Schottky Groups and {M}cShane's Identity}.
\newblock PhD thesis, Department of Mathematics, National University of Singapore, Singapore, 2004.

\end{thebibliography}
\bibliographystyle{plain}

\end{document}